\theoremstyle{plain}
\newtheorem{theorem}{Theorem}[section]
\newtheorem{lemma}{Lemma}[section]
\theoremstyle{definition}
\newtheorem{definition}{Definition}[section]
\theoremstyle{remark}
\newtheorem{remark}{Remark}
\renewcommand{\v}[1]{\boldsymbol{\mathbf{#1}}} 
\newcommand{\overbar}[1]{\mkern 1.5mu\overline{\mkern-1.5mu#1\mkern-1.5mu}\mkern 1.5mu}
\renewcommand{\span}{\operatorname{span}}
\let\div\undefined
\DeclareMathOperator{\div}{div}
\DeclareMathOperator{\curl}{curl}
\DeclareMathOperator{\tr}{tr}
\DeclareMathOperator{\ess}{ess}
\numberwithin{equation}{section} 
\numberwithin{figure}{section} 
\numberwithin{table}{section} 
\definecolor{grisf}{rgb}{.47,.47,.47} 
\newcommand{\colorc}{\color{SkyBlue}}
\newcommand{\colorb}{\color{Cerulean}}
\newcommand{\colora}{\color{Blue}}
\titleformat{\chapter}[display]
  {\normalfont\sffamily\bfseries\huge\colora\centering}{\thechapter}{1ex}
  {{\titlerule[1pt]}\vspace{1.3ex}}[\vspace{1ex}{{\titlerule[1pt]}}]
\titleformat{\section}[hang]{\Large\normalfont\sffamily\bfseries\colorb}{{\thesection\, }}{0 em}
  {}[{\titlerule[1pt]}\vspace{1ex}]
\titleformat{\subsection}[hang]{\Large\normalfont\sffamily\bfseries\colorc}{{\thesubsection\, }}{0 em}
  {}[{\titlerule}\vspace{.7ex}]
\titleformat{\subsubsection}[hang]{\normalfont\sffamily\bfseries\large}{{\thesubsubsection\, }}{0 em}
  {}[{\color{grisf}\titlerule}\vspace{3pt}]
\titleformat{\paragraph}[runin]{\normalfont\sffamily\bfseries\colorc}{}{0 em}
  {\indent}
\patchcmd{\@fancyhead}{\rlap}{\color{grisf}\rlap}{}{}
\patchcmd{\headrule}{\hrule}{\color{grisf}\hrule}{}{}
\patchcmd{\@fancyfoot}{\rlap}{\color{grisf}\rlap}{}{}
\patchcmd{\footrule}{\hrule}{\color{grisf}\hrule}{}{}
\title{An inverse problem for an electroseismic model describing the coupling phenomenon of electromagnetic and seismic waves}
\author{Eric BONNETIER\footnote{Eric BONNETIER,
Institut Fourier, Universit\'e Grenoble-Alpes, 
BP 74, 38402 Saint-Martin-d'H\`eres Cedex, France; Email:  Eric.Bonnetier@univ-grenoble-alpes.fr} \;
 Faouzi TRIKI\footnote{Faouzi Triki,  Laboratoire Jean Kuntzmann,  UMR CNRS 5224, 
Universit\'e  Grenoble-Alpes, 700 Avenue Centrale,
38401 Saint-Martin-d'H\`eres, France; Email: Faouzi.Triki@univ-grenoble-alpes.fr},  \; and   Qi XUE \footnote{Qi Xue,  Laboratoire Jean Kuntzmann,  UMR CNRS 5224, 
Universit\'e  Grenoble-Alpes, 700 Avenue Centrale,
38401 Saint-Martin-d'H\`eres, France; Email: Qi.Xue@univ-grenoble-alpes.fr}}
\begin{document}
\maketitle

\begin{abstract}
The electroseismic model describes the coupling phenomenon of the electromagnetic waves and seismic waves in fluid
immersed porous rock. Electric parameters have better contrast than elastic parameters while seismic waves provide
better resolution because of the short wavelength. The combination of theses two different waves
is prominent in oil exploration. Under some assumptions on the physical parameters, we derived a Hölder stability estimate to the inverse problem of recovery of the electric parameters and the coupling coefficient from the knowledge of the fields in a small open domain near the boundary.
 The proof is based on a Carleman estimate of the electroseismic model. 
\end{abstract}

\noindent {\footnotesize {\bf AMS subject classifications.} 35R30}

\noindent {\footnotesize {\bf Key words.} 
Inverse problems, Electro-seismic model, H\"older  stability, Biot's system, Carleman estimate.}

\pagestyle{fancy}
\section{Introduction}

The traveling of seismic waves underground generates 
electromagnetic (EM) waves and vice versa. This phenomena, electro-kinetic coupling, 
is explained by the electro-kinetic theory, which considers that 
the sediment layers of the earth are porous media saturated with fluid electrolyte. The solid grains
of porous media carry extra electric charges (usually negative) on their surfaces as a result of
the chemical reactions between the ions in the fluid and the crystals that compose the solid.
These charges are balanced by ions of opposite sign in the fluid, forming thus an electrical double layer.
When seismic waves propagate through porous media, the relative solid-fluid motion induces
an electrical current which is a source of EM waves.
Conversely, when EM waves pass through such porous media, ions in the fluid are set in motion and drag
the fluid as well, because of viscous traction.
\vskip .3cm

 Electro-kinetic coupling has been observed by
geophysicist, see e.g., \cite{zhu2000experimental, garambois2001seismoelectric, haines2007seismoelectric}.
This effect rose interest in the physics community, as the coupling of EM and seismic waves may
provide an efficient tool for imaging the subsoil in view of oil prospection.
Such an imaging technique, and the associated inverse problem of reconstructing
the constitutive parameters of the subsoil, fall into the category of multi-physics inverse
problems, where a medium is probed using two types of waves (see for example~\cite{ammari2017multi, kuchment2011mathematics}
and references therein for medical imaging). 
One type of waves is very
sensitive to the contrast in the parameters that describe the properties of the medium
(electric permittivity, magnetic permeability and conductivity in our case) 
however, these waves are usually very diffusive and
only scattered information arrives to the medium boundary, where the data are collected. 
The other type, on the contrary, is not very sensitive to changes in the medium
properties, but is able to carry information through the medium with little distortion (seismic waves in our case).
\vskip .3cm

In 1994, Pride \cite{pride1994governing} derived a macroscopic model in the frequency domain
that models the coupling of EM and seismic waves in fluid-saturated porous media by averaging microscopic
properties, see also \cite{pride1996electroseismic, pride2005electroseismic}.
The associated system of equations is composed of the Maxwell equations, which govern the propagation of EM waves,
and of the Biot equations \cite{biot1962generalized}, which govern the propagation of seismic waves in porous media.
\vskip .3cm

Because the electro-kinetic coupling is very weak in practice, one usually neglects
multi-conversion, i.e., one neglects the coupling terms in either the Maxwell or the Biot equations, and thus 
only considers transformations from either EM to seismic waves (\textit{electroseismic}) or from seismic to EM waves (\textit{seismoelectric}).
At low frequency, one can expand all the parameters of the model with respect to frequency, and neglecting high order terms
results in a time domain model. This is done for example in \cite{haines2006seismoelectric} for
the seismoelectric model. In our paper, we are interested in the electroseismic model, which takes the form
\begin{eqnarray}
\partial_t \v D-\curl (\alpha\v B) +\gamma \v D &=& \v 0 ,\label{em_d}\\
\partial_t \v B+\curl (\beta \v D) &=& \v 0 , \label{em_b}\\
\rho  \partial^2_t\v u+\rho_f\partial^2_t\v w-\div \v \tau &=& \v 0 \label{biot_u} ,\\
\rho_f\partial^2_t\v u+\rho_e\partial^2_t\v w+\nabla p    
	+\tfrac{\eta}{\kappa}\partial_t \v w-\xi\v D &=& \v 0 \label{biot_w} ,\\
(\lambda\div \v u+C\div \v w)\v I+G(\nabla\v u+\nabla\v u^T) &=& \v \tau \label{biot_tau} ,\\
C\div \v u+ M\div \v w &=& -p \label{biot_p},
\end{eqnarray}
where 
$$\alpha = \tfrac{1}{\mu},\ \ \beta = \tfrac{1}{\varepsilon},\ \  \gamma = \tfrac{\sigma}{\varepsilon},
\ \ \xi = \tfrac{L\eta}{\kappa\varepsilon}.$$
The physical meaning of all the variables and parameters is given in Table \ref{table_param}.
All the parameters are real and positive. 
Throughout the text, we denote by $\partial_t$ and $\partial_j$ the partial derivatives of a function
with respect to $t$ and $x_j$ respectively, and by $\nabla$ (resp. $\nabla_{\v x,t}$) the gradients with respect
to the variables $\v x$ (resp. $\v x$ and t). By gradient of a vector-valued function, we mean the transpose of the Jacobian
matrix. 

\begin{table}
\label{table_param}
\centering
\begin{tabular}{rlrl}
\hline
$\v{D}$ & electric flux &
$\v{B}$ & magnetic flux \\
$\v{u}$ & solid displacement &
$\v{w}$ & relative fluid displacement \\
$\v\tau$ & bulk stress tensor &
$p     $ & pore pressure \\
$\sigma$ & electric conductivity &
$\varepsilon$ & electric permittivity \\
$\mu$         & magnetic permeability &
$L$      & electro-kinetic parameter\\
$\kappa$ & fluid flow permeability &
$\eta$   & fluid viscosity \\
$\lambda, G$  & Lam\'{e} elastic parameters &
$C, M$   & Biot moduli parameters \\
$\rho  $ & bulk density &
$\rho_f$ & fluid density \\
$\rho_e$ & equivalent density \\
\hline
\end{tabular}
\caption{Physical meanings of variables and parameters}
\end{table}

\vskip .3cm
To close the Maxwell system (\ref{em_d})-(\ref{em_b}), 
we assume that the media do not contain any free charge, i.e., 
\begin{equation}
\div\v D=\div\v B=0. \label{em_div}
\end{equation}
We consider the system of equations (\ref{em_d})-(\ref{em_div}) in $Q=\Omega\times(-T,T)$ where $\Omega\in\mathbb{R}^3$
is a bounded domain with $C^{\infty}$ boundary $\partial\Omega$.
The boundary conditions are
\begin{equation}
\v n\times\v D=\v 0,\ \ \v n\cdot\v B=0,\ \  \v n\cdot \v \tau=\v 0,\ \  p=0, \quad\text{ on } \partial\Omega,
\end{equation}
where $\v n$ is the outer normal vector on $\partial\Omega$.
As we are interested in the electroseismic model, we consider 0 initial values for the solid displacement $\v u$ and for the
relative fluid displacement $\v w$ associated to the Biot equations
\begin{equation}
\v u(\v x,0) = \v 0,\ \ \v w(\v x,0) = \v 0,\ \ \partial_t \v u(\v x,0) = \v 0,\ \ \partial_t\v w(\v x,0) = \v 0,
\end{equation}
while we impose electric and magnetic fluxes in $\Omega$
\begin{equation}
\v D(\v x,0) = \v D_0(\v x),\ \ \v B(\v x,0) = \v B_0(\v x).
\end{equation}

\vskip .3cm

In accordance with the accepted physical properties of underground media, we assume that the matrices
\begin{equation}\label{equation:spd}	
\left(\begin{array}{cc}
\rho & \rho_f\\ \rho_f & \rho_e
\end{array}\right)\quad\text{and}\quad\left(\begin{array}{cc}
\lambda & C\\ C & M
\end{array}\right)\quad
\end{equation}
are symmetric positive definite and that
$\rho_e>\rho_f, \rho>\rho_f$.
We also assume that all the parameters of the Biot equations (\ref{biot_u})-(\ref{biot_p}) are known, 
except the coupling coefficient $\xi$.
The main object of this paper is to analyse  the well-posedness of the  inverse problem of determining the parameters $(\alpha,\beta,\gamma,\xi)$
 from measurements of $(\v D, \v B, \v u, \v w)$ in $Q_\omega$, where $Q_\omega=\omega\times(-T,T)$ and $\omega\subset \Omega$ is 
a fixed neighborhood of the boundary. To the best of our knowledge, \cite{chen2013inverse} and its following work
\cite{chen2014inverse} are the only papers considering the inverse electroseismic problem. 
In those papers, the authors considered the second inversion step in 
frequency domain, 
assuming  that $L\v E$ is known everywhere in $\Omega$, they focus on the identification of $(L,\sigma)$.
Their method is based on the CGO solutions of frequency domain Maxwell equations \cite{colton1992uniqueness,
sylvester1987global}.
Different from their work, our method treats the  global inversion and  is based on a  Carleman estimate to the electroseismic  model \cite{bellassoued2013carleman, 0266-5611-8-4-009, isakov2004carleman}.
Under some assumptions on the physical parameters 
we derive a H\"older  stability estimate  to the inverse problem of  identification of the electric parameters and the coupling coefficient with only measurements near the boundary.  The main stability result is provided in 
Theorem~\eqref{theorem:inverse_problem}.
\vskip .3cm

The paper is organized as follows:   Section \ref{section:forward} is devoted to the existence and uniqueness 
of solutions to the forward problem.
In Section \ref{section:carleman}, we derive a  Carleman estimate for the whole electroseismic system, from which 
we infer, in section \ref{section:inverse_problem}, the H\"{o}lder stability of the inverse problem with measurements 
of $(\v D, \v B, \v u, \v w)$ near the boundary.

\section{Existence and uniqueness for Biot's system}\label{section:forward}
As stated before, in the electroseismic system the Maxwell equations are totally independent of 
the Biot equations. Therefore, the question of existence and uniqueness of solutions to the electroseismic system
reduces to showing existence and uniqueness of solutions to the Biot equations.
To the author's best knowledge, existence and uniqueness for the Biot equations in two dimension was first proved in
\cite{santos1986elastic}. In \cite{bellassoued2013carleman}, the 3D case is studied, but with different boundary conditions
than those considered here. Although the general arguments are similar, we prove the existence and uniqueness of solutions to our version of the Biot equations for the sake of completeness.

\vskip .3cm
We first introduce some notations.
For two matrices $\v E=(E_{ij}), \v F=(F_{ij})$ of the same size, we define 
$$\v E:\v F = \sum_{i,j}E_{ij}F_{ij}.$$
For a given Hilbert space H, $(u,v)_H$ denotes the inner product of $u,v\in H$ and $\|u\|_H$ the corresponding
norm. The dual space of $H$ is denoted by $H'$ and $\langle u, f\rangle$ represents the duality pairing of
$u\in H, f\in H'$. We use $[H]^{m}$ to denote the space of vector-valued functions $\v u=(u_1,\ldots,u_m)$ such that
$u_j\in H,\ 1\le j\le m$. The inner product on this space is defined by
$(\v u, \v v)_{H} = \sum_i(u_i,v_i)_H$. We use similar definitions for spaces of matrix-valued functions.
When we consider the space $L^2(\Omega)$ or $[L^2(\Omega)]^m$, we usually omit all subscripts. 
The Sobolev space $H(\div,\Omega)$ is defined by 
$$\big\{\v u\in[L^2(\Omega)]^3:\div\v u\in L^2(\Omega)\big\}$$
and is equipped with the inner product
$$(\v u, \v v)_{H(\div)}=(\v u,\v v)+(\div\v u,\div\v v).$$
We use $L^p(-T,T;H)$ to denote the space of functions $f:(-T, T)\rightarrow H$ satisfying
$$\|f\|_{L^p(-T,T;H)}:=\left(\int_{-T}^{T}\|f\|_H^pdt\right)^{1/p}<\infty$$ 
for $1\le p< \infty$, and define
$$\|f\|_{L^{\infty}(-T,T;H)}:=\ess\sup_{t\in (-T, T)}\|f\|_H<\infty.$$
Denoting $V=[H^1(\Omega)]^3\times H(\div,\Omega), \v v_1 = \v u, \v v_2 = \v w$, 
$$\v v = \left(\begin{array}{c}
\v v_1 \\ \v v_2
\end{array}\right),\quad
\v F = \left(\begin{array}{c}
\v 0 \\ \xi\v D
\end{array}\right),$$
$$\v A=\left(\begin{array}{cc}
\rho  \v I_3 & \rho_f \v I_3 \\
\rho_f\v I_3 & \rho_e \v I_3
\end{array}\right),\quad 
\v B=\left(\begin{array}{cc}
\v 0 & \v 0 \\
\v 0 & \tfrac{\eta}{\kappa}\v I_3
\end{array}\right),\quad
\mathcal{L}\v v = \left(\begin{array}{c}
-\div\v \tau \\
\nabla p 
\end{array}\right),
$$
the Biot equations can be compactly written in the form 
\begin{equation}\label{equation:biot_compact}
\left\{\begin{array}{cl}
\v A\partial_t^2\v v+\v B\partial_t\v v+\mathcal{L}\v v = \v F, &\quad \text{in } \Omega\times(-T,T),\\
\v v(\v x, 0) = 0, & \quad \text{in } \Omega,\\
\partial_t\v v(\v x, 0) = 0, & \quad \text{in } \Omega,\\
\v n\cdot \v \tau=\v 0,\ \  p=0, &\quad\text{on } \partial\Omega\times(-T,T).
\end{array}\right.
\end{equation}
Integration by parts and using the boundary conditions, we have
\begin{eqnarray*}
(\mathcal{L}\v v,\v v') &=& \int_{\Omega}\big(-\div\v\tau\cdot\v v_1'+\nabla p\cdot\v v_2'\big)\\
&=& \int_{\Omega}\big(\v\tau : \nabla\v v_1'-p\div\v v_2' \big)\\
&=& \big(\div\v v_1,\lambda\div\v v_1'+C\div\v v_2'\big)+
 	\big(\div\v v_2,C\div\v v_1'+M\div\v v_2'\big)+\big(2G e(\v v_1), e(\v v_1')\big),
\end{eqnarray*}
where $e(\v v_1)=\tfrac{1}{2}(\nabla\v v_1+\nabla\v v_1^T).$\\

Define 
$$\mathcal{B}(\v v,\v v') = \big(\div\v v_1,\lambda\div\v v_1'+C\div\v v_2'\big)+
 	\big(\div\v v_2,C\div\v v_1'+M\div\v v_2'\big)+\big(2G e(\v v_1), e(\v v_1')\big).$$
It's obvious that $\mathcal{B}$ is a symmetric bounded bilinear form.
We recall the Korn inequality
$$\big(e(\v v_1),e(\v v_1)\big)\ge C_0\|\v v_1\|^2_{H^1}-\|\v v_1\|^2,$$
where $C_0$ is a  strictly positive constant. From now on, we use $C_0$ to denote a general positive constant
which may take different values at different places.
From the Korn inequality, we obtain
\begin{eqnarray*}
\mathcal{B}(\v v,\v v) &\ge& \int_{\Omega}\left(\div\v v_1\ \ \div\v v_2\right)
	\left(\begin{array}{cc}
		\lambda & C\\ C & M
		\end{array}\right)\left(\begin{array}{c}
			\div\v v_1\\ \div\v v_2
			\end{array}\right)d\v x+
	2\min\{G\}\big(e(\v v_1),e(\v v_1)\big)\\
&\ge& \lambda_* \|\div\v v_1\|^2+\lambda_*\|\div \v v_2\|^2+2C_0\min\{G\}\|\v v_1\|^2_{H^1}-2\min\{G\}\|\v v_1\|^2\\
&\ge& C_0\|\v v\|^2_V-\theta\|\v v\|^2,
\end{eqnarray*}
where $\theta$ is a positive constant independent of $\v v$ and $\lambda_*$ is the smallest eigenvalue of the matrix 
$$\left(\begin{array}{cc}\lambda & C\\ C & M  \end{array}\right).$$
We define $\mathcal{B}_{\theta}(\v v,\v v')=\mathcal{B}(\v v,\v v')+\theta(\v v,\v v')$. The bilinear form $\mathcal{B}_{\theta}$ is
symmetric, bounded, and it satisfies the following ellipticity condition $\mathcal{B}_{\theta}(\v v, \v v)\ge C_0\|\v v\|_V^2$.
\begin{definition}Let $\ \v F\in H^1(-T,T;[L^2(\Omega)]^6)$.
We call $\v r\in L^{\infty}(-T,T;V)$ a generalized solution to (\ref{equation:biot_compact}) if it satisfies
\begin{equation}\label{equation:biot_generalized}
(\v A\partial_t^2\v r(t),\v v)+(\v B\partial_t\v r(t), \v v )+\mathcal{B}(\v r(t), \v v) = (\v F(t), \v v)
\quad \text{ a.e. } t\in(-T,T)
\end{equation}
for any $\v v\in V$.
\end{definition}
Note that the scalar products in this definition only involve the $\v x$ variable.
We can now state the existence and uniqueness theorem for the Biot equations.
\begin{theorem}
Let $\ \v F\in H^1(-T,T;[L^2(\Omega)]^6)$. Then the system
(\ref{equation:biot_compact}) has a unique weak solution $\v r(\v x, t)$ such that 
$$\v r,\ \partial_t \v r\in L^{\infty}(-T,T;V(\Omega)),\text{ and } \partial_t^2\v r\in L^{\infty}(-T,T;[L^2(\Omega)]^6).$$
\end{theorem}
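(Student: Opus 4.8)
The plan is to construct the solution by a Galerkin approximation and to pass to the limit using two successive energy estimates: a first one controlling $\v r$ in $V$ and $\partial_t\v r$ in $[L^2(\Omega)]^6$, and a second, higher-order one---where the hypothesis $\v F\in H^1$ is used---controlling $\partial_t\v r$ in $V$ and $\partial_t^2\v r$ in $[L^2(\Omega)]^6$. Since $V$ is a separable Hilbert space, I fix a sequence $\{w_k\}_{k\ge1}$ that is total in $V$ and seek $\v r_m(t)=\sum_{k=1}^m d_k^m(t)\,w_k$ solving $(\v A\partial_t^2\v r_m,w_k)+(\v B\partial_t\v r_m,w_k)+\mathcal B(\v r_m,w_k)=(\v F,w_k)$ for $1\le k\le m$, with $\v r_m(0)=\partial_t\v r_m(0)=0$. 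As $\v A$ is symmetric positive definite, the mass matrix $[(\v A w_j,w_k)]$ is invertible, so this is a linear second-order ODE system with a unique solution; the regularity $t\mapsto(\v F,w_k)\in H^1(-T,T)$ makes each $d_k^m$ belong to $H^3(-T,T)$, which is enough to differentiate the Galerkin identity once in time as needed below.

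For the first estimate I test with $\partial_t\v r_m$ (multiply the $k$-th equation by $\dot d_k^m$ and sum). By symmetry of $\v A$ and $\mathcal B$, the inertial and elastic terms combine into $\tfrac{d}{dt}E_m(t)-\theta(\v r_m,\partial_t\v r_m)$, where $E_m(t)=\tfrac12(\v A\partial_t\v r_m,\partial_t\v r_m)+\tfrac12\mathcal B_\theta(\v r_m,\v r_m)$, while the damping term $(\v B\partial_t\v r_m,\partial_t\v r_m)\ge0$ has a favourable sign. By positivity of $\v A$ and ellipticity of $\mathcal B_\theta$ one has $E_m\ge c_0\big(\|\partial_t\v r_m\|^2+\|\v r_m\|_V^2\big)$, and Young's inequality applied to $(\v F,\partial_t\v r_m)$ and to $\theta(\v r_m,\partial_t\v r_m)$ gives $\tfrac{d}{dt}E_m\le C\,(E_m+\|\v F\|^2)$. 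Grönwall's inequality (run on $[0,T]$ and symmetrically on $[-T,0]$) together with $E_m(0)=0$ then bounds $\|\v r_m\|_{L^\infty(-T,T;V)}$ and $\|\partial_t\v r_m\|_{L^\infty(-T,T;[L^2]^6)}$ uniformly in $m$ by $\|\v F\|_{L^2(-T,T;[L^2]^6)}$.

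The second estimate is the core of the argument. Differentiating the Galerkin identity in $t$, the function $\v s_m:=\partial_t\v r_m$ solves the same system with source $\partial_t\v F$; its data are $\v s_m(0)=0$ and, evaluating the original identity at $t=0$ where the $\mathcal B$ and $\v B$ contributions vanish, $(\v A\partial_t\v s_m(0),w_k)=(\v F(0),w_k)$, so $\|\partial_t\v s_m(0)\|\le C\|\v F(0)\|\le C\|\v F\|_{H^1(-T,T;[L^2]^6)}$ since $H^1$ in time embeds in $C^0$. Repeating the energy computation for $\v s_m$ and applying Grönwall with this initial bound yields uniform control of $\|\partial_t\v r_m\|_{L^\infty(-T,T;V)}$ and $\|\partial_t^2\v r_m\|_{L^\infty(-T,T;[L^2]^6)}$. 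With these bounds I extract weak-$*$ convergent subsequences of $\v r_m,\partial_t\v r_m$ in $L^\infty(-T,T;V)$ and of $\partial_t^2\v r_m$ in $L^\infty(-T,T;[L^2]^6)$; passing to the limit against each $w_k$ multiplied by a smooth scalar cutoff in $t$, and invoking totality of $\{w_k\}$, shows that the limit $\v r$ satisfies \eqref{equation:biot_generalized} for all $\v v\in V$ and a.e.\ $t$, with the asserted regularity and the zero initial conditions recovered from time-continuity. For uniqueness I apply the first energy identity to a solution with $\v F=0$ and vanishing data: testing with $\partial_t\v r(t)\in V$ (admissible a.e.\ by the regularity just obtained) gives $\tfrac{d}{dt}\tilde E\le C\tilde E$ with $\tilde E(0)=0$, forcing $\v r\equiv0$.

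The step I expect to be the main obstacle is the second, higher-order energy estimate: it is what compels the hypothesis $\v F\in H^1(-T,T;[L^2(\Omega)]^6)$, it requires differentiating the Galerkin system in time and estimating $\partial_t^2\v r_m(0)$ from the equation at $t=0$, and it is precisely what produces the regularity $\partial_t^2\v r\in L^\infty(-T,T;[L^2(\Omega)]^6)$ claimed in the theorem. A secondary subtlety is that $H(\div,\Omega)$ does not embed compactly into $[L^2(\Omega)]^3$, so the scheme cannot be based on eigenfunctions of $\mathcal B_\theta$; this causes no difficulty, however, since the construction only uses an arbitrary total sequence in $V$ and never appeals to compactness.
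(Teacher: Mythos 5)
Your proof is correct and follows essentially the same route as the paper: a Galerkin scheme built on a total sequence in $V$, a first energy estimate from testing with $\partial_t\v r_m$, a second from differentiating the Galerkin identity in time and testing with $\partial_t^2\v r_m$, weak-$*$ extraction of limits, and uniqueness via the energy identity with $\v F=\v 0$. If anything, you are more careful than the paper at the one delicate point---bounding $\partial_t^2\v r_m(0)$ by $\|\v F(0)\|$ using the equation at $t=0$ and the embedding $H^1(-T,T;[L^2(\Omega)]^6)\hookrightarrow C^0([-T,T];[L^2(\Omega)]^6)$---which the paper passes over without comment.
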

\begin{proof}
Since $V$ is separable, there exists a sequence of linearly independent functions  $\{\v v^{(n)}\}_{n\ge 1}$ 
which form a basis of $V$. Let us define
$$S_m = \span\left\{\v v^{(1)}, \v v^{(2)}, \ldots, \v v^{(m)}\right\},$$
and choose 
$$\v r^{(m)}(t) = \sum_{j=1}^m g_{jm}(t)\v v^{(j)}$$
such that $\v r^{(m)}(0)\rightarrow 0, \ \partial_t \v r^{(m)}(0)\rightarrow 0$.
The functions $g_{jm}(t)$ are determined by the system of ordinary differential equations 
\begin{equation}\label{equation:biot_generalized1}
\big(\v A\partial_t^2\v r^{(m)},\v v\big)+\big(\v B\partial_t\v r^{(m)}, \v v\big) + \mathcal{B}\big(\v r^{(m)},\v v\big) 
= (\v F, \v v), \quad \v v\in S_m.
\end{equation}
Next we prove two a priori estimates of $\v r^{(m)}(t)$. By choosing $\v v=\partial_t\v r^{(m)}$, we obtain
\begin{equation}\label{equation:biot_generalized_approximate}
\big(\v A\partial_t^2\v r^{(m)}, \partial_t\v r^{(m)}\big)+\big(\v B\partial_t\v r^{(m)}, \partial_t\v r^{(m)}\big) + 
\mathcal{B}\big(\v r^{(m)},\partial_t\v r^{(m)}\big) = \big(\v F, \partial_t\v r^{(m)}\big).
\end{equation}
Let $\Lambda(t) = \|\v A^{1/2}\partial_t\v r^{(m)}(t)\|^2+\mathcal{B}_{\theta}\big(\v r^{(m)}(t),\v r^{(m)}(t)\big)$.
Since $\mathcal{B}_{\theta}$ is elliptic, $\Lambda(t)$ can be lower bounded by
$$\Lambda(t)\ge C_0\big(\|\v r^{(m)}(t)\|^2_V+\|\partial_t\v r^{(m)}(t)\|^2\big)$$
and from (\ref{equation:biot_generalized_approximate})
$$\frac{d}{dt}\Lambda(t) \le C_0\big(\|\v F(t)\|^2+\|\v r^{(m)}(t)\|_V^2+\|\partial_t\v r^{(m)}(t)\|^2\big).$$
Integrating from $0$ to $t$ yields
$$\Lambda(t)\le C_0\int_{-T}^T\|\v F(\tau)\|^2 d\tau+
	\Lambda(0)+C_0\int_0^t\big(\|\v r^{(m)}(\tau)\|_V^2+\|\partial_{\tau}\v r^{(m)}(\tau)\|^2\big) d\tau.$$
Since $\Lambda(0) = \|\v A^{1/2}\partial_t\v r^{(m)}(0)\|^2+\mathcal{B}_{\theta}\big(\v r^{(m)}(0),\v r^{(m)}(0)\big)$
and $\v r^{(m)}(0), \partial_t \v r^{(m)}(0)\rightarrow 0$, $\Lambda(0)$ is bounded by a constant $C_0$ independent of $m$.
We conclude that
\begin{equation}
\|\v r^{(m)}(t)\|_V^2+\|\partial_t\v r^{(m)}(t)\|^2 \le
C_0+C_0\int_0^t\big(\|\v r^{(m)}(\tau)\|^2+\|\partial_{\tau}\v r^{(m)}(\tau)\|^2\big)
\end{equation}
and by the Gronwall inequality
\begin{equation}\label{equation:biot_gronwall1}
\|\v r^{(m)}(t)\|_V^2+\|\partial_t\v r^{(m)}(t)\|^2\le C_0
\end{equation}
where $C_0$ is independent of $t$ and $m$.
Taking the time derivative of (\ref{equation:biot_generalized1}) and choosing $\v v = \partial_t^2\v r^{(m)}$, we have
\begin{equation}
\big(\v A\partial_t^3\v r^{(m)}, \partial_t^2\v r^{(m)}\big)+\big(\v B\partial_t^2\v r^{(m)}, \partial_t^2\v r^{(m)}\big) + 
\mathcal{B}\big(\partial_t\v r^{(m)},\partial_t^2\v r^{(m)}\big) = \big(\partial_t\v F, \partial_t^2\v r^{(m)}\big).
\end{equation}
Following the same process that leads to (\ref{equation:biot_gronwall1}), we obtain
\begin{equation}\label{equation:biot_gronwall2}
\|\partial_t\v r^{(m)}(t)\|_V^2+\|\partial_t^2\v r^{(m)}(t)\|^2\le 
C_0.
\end{equation}Therefore 
$$\v r^{(m)},\ \partial_t\v r^{(m)}\in L^{\infty}(-T,T;V),\quad \partial_t^2\v r^{(m)}\in L^{\infty}(-T,T;[L^2(\Omega)]^6)$$
are bounded.  It follows that we can extract a subsequence of $\{\v r^{(m)}\}$, still denoted by $\{\v r^{(m)}\}$, such that
$$\v r^{(m)}\rightarrow\v r, \ \partial_t\v r^{(m)}\rightarrow \partial_t\v r\quad \text{ weak-* in } L^{\infty}(-T,T; V)$$
and
$$\partial_t^2\v r^{(m)}\rightarrow \partial_t^2\v r\quad \text{ weak-* in } L^{\infty}(-T,T; [L^2(\Omega)]^6).$$
Since $\{\v v^{(m)}\}$ is dense in $V$, we have, for any $\v v\in V$,
\begin{eqnarray*}
\big(\v A\partial_t^2\v r^{(m)},\v v\big)\rightarrow  \big(\v A\partial_t^2\v r,\v v\big) & & \text{ weak-* in } L^{\infty}(-T,T), \\ 
\big(\v B\partial_t\v r^{(m)}, \v v\big)\rightarrow  \big(\v B\partial_t\v r, \v v\big) & & \text{ weak-* in } L^{\infty}(-T,T), \\
\mathcal{B}\big(\v r^{(m)},\v v\big)\rightarrow  \mathcal{B}\big(\v r^{(m)},\v v\big) & & \text{ weak-* in } L^{\infty}(-T,T).
\end{eqnarray*}
The existence is completed by letting $m\rightarrow \infty$ in (\ref{equation:biot_generalized1}).
The uniqueness is obvious from (\ref{equation:biot_gronwall1}) and (\ref{equation:biot_gronwall2})
by choosing $\v F = 0$.
\end{proof}

\section{A Carleman estimate for the electroseismic model}\label{section:carleman}
To derive a Carleman estimate for a system of equations, the usual process consists in diagonalizing the system
and then in applying a Carleman estimate for each scalar equation that composes the diagonalized system
\cite{isakov2006inverse}. 
We first recall a known Carleman estimate for the scalar wave equation \cite{isakov2004carleman, isakov2006inverse}.
\begin{lemma}\label{lemma:scalar_wave_carleman}
Assume that there exists a point $\v x_*\in \mathbb{R}^3\setminus\overbar{\Omega}$ and a strictly positive function  $c(\v x)\in C^1(\overbar\Omega)$ which satisfies 
\begin{equation}\label{equation:pseudoconvex}
\frac{\nabla c\cdot (\v x-\v x_*)}{2c} <1-c_0,\quad \text{for all }  \v x\in\overbar{\Omega} ,
\end{equation}
where $c_0\in (0,1)$ is a fixed  constant. Then, there exist constants $\varsigma, \theta, C_0>0$, such that the function
$\varphi = e^{\theta\psi}$ given by $\psi = |\v x-\v x_*|^2-\varsigma|t|^2$
satisfies $\varphi(\v x,T)=\varphi(\v x,-T)<1, \varphi(\v x,0)\ge 1$ and
$$\int_Q e^{2\tau\varphi}\big(\tau^3|u|^2+\tau|\nabla_{\v x, t} u|^2\big)\leq C_0\int_Q e^{2\tau\varphi}|f|^2,$$
for all $\tau$ large than a positive constant $\tau_0$ and for any $u\in C_0^2(Q)$ that solves
$$\partial_t^2 u-c(\v x)\Delta u = f.$$
The notation $|\cdot|^2$ means the sum of the square of all the components of vectors or matrices.
\end{lemma}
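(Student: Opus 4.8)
The plan is to use the classical conjugation method of H\"ormander, adapted to the hyperbolic operator $P = \partial_t^2 - c(\v x)\Delta$. Because $u \in C_0^2(Q)$ is compactly supported, every integration by parts below produces no boundary contribution, which is the main simplification we exploit. First I would reduce the estimate to a weighted identity: setting $w = e^{\tau\varphi}u$ and $L_\tau w = e^{\tau\varphi}P(e^{-\tau\varphi}w)$, a direct computation gives
\begin{equation*}
L_\tau w = \partial_t^2 w - c\Delta w + \tau^2\big(\varphi_t^2 - c|\nabla\varphi|^2\big)w - 2\tau\varphi_t\partial_t w + 2\tau c\,\nabla\varphi\cdot\nabla w + \tau\big(c\Delta\varphi - \varphi_{tt}\big)w.
\end{equation*}
I would split $L_\tau = S + A$ into its formally self-adjoint part $S$ (the terms even under integration by parts: the second-order operator and the $\tau^2$-potential) and its skew-adjoint part $A$ (the first-order transport terms, together with the zeroth-order term needed to make $A^* = -A$). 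Since $w$ has compact support,
\begin{equation*}
\int_Q e^{2\tau\varphi}|f|^2 = \|L_\tau w\|_{L^2(Q)}^2 = \|Sw\|^2 + \|Aw\|^2 + \big(w, [S,A]w\big),
\end{equation*}
so the whole estimate follows once one proves the G\aa rding-type lower bound $(w, [S,A]w) \ge C_0\int_Q(\tau^3|w|^2 + \tau|\nabla_{\v x,t}w|^2)$ modulo terms absorbable into $\|Sw\|^2 + \|Aw\|^2$.

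The heart of the argument, and the step where hypothesis (\ref{equation:pseudoconvex}) is used, is the positivity of the commutator form. First, $\v x_* \notin \overline\Omega$ guarantees $\nabla_{\v x}\psi = 2(\v x - \v x_*) \neq 0$ on $\overline\Omega$, so $\psi$ has no critical points in $\overline Q$. Computing the principal symbol of $[S,A]$ amounts to the iterated Poisson bracket $\{p,\{p,\varphi\}\}$ with $p(\v x,\xi,s) = s^2 - c|\xi|^2$; on the characteristic cone $\{s^2 = c|\xi|^2\}$ the leading contribution of the unexponentiated weight $\psi$ is proportional to
\begin{equation*}
c|\xi|^2\Big(c - \varsigma - \tfrac12\nabla c\cdot(\v x - \v x_*)\Big),
\end{equation*}
up to a term carrying a factor $\nabla c$ that is not sign-definite. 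Hypothesis (\ref{equation:pseudoconvex}) is exactly $\tfrac12\nabla c\cdot(\v x - \v x_*) < (1-c_0)c$, hence $c - \tfrac12\nabla c\cdot(\v x - \v x_*) > c_0 c > 0$, so this leading bracket is strictly positive provided $\varsigma < c_0\min_{\overline\Omega}c$. Replacing $\psi$ by $\varphi = e^{\theta\psi}$ multiplies the dominant quadratic form by $\theta^2\varphi\,\nabla\psi\otimes\nabla\psi$, which is positive definite in the $\nabla\psi$ direction; this is H\"ormander's exponentiation trick, and for $\theta$ large it lets the positive principal part dominate the indefinite $\nabla c$-term and all remaining lower-order contributions, yielding the required G\aa rding bound.

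Finally I would fix the parameters and transfer back to $u$. The weight properties hold by choosing $\varsigma$ in the admissible window: pseudoconvexity forces $\varsigma < c_0\min_{\overline\Omega}c$, while $\varphi(\v x,\pm T) < 1 \le \varphi(\v x,0)$ forces $\varsigma > \sup_{\overline\Omega}|\v x - \v x_*|^2/T^2$ (using $\psi(\v x,0) = |\v x - \v x_*|^2 \ge 0$ and the evenness of $\psi$ in $t$); these are compatible precisely when $T$ is large enough relative to the wave speed and the size of $\Omega$, the usual observability constraint. With $\varsigma$ fixed, then $\theta$ large, then $\tau \ge \tau_0$ large, the commutator estimate gives the inequality for $w$; since $\nabla_{\v x,t}w = e^{\tau\varphi}(\nabla_{\v x,t}u + \tau u\,\nabla_{\v x,t}\varphi)$, the extra $\tau^2|\nabla\varphi|^2|u|^2$ terms are controlled by the $\tau^3|u|^2$ term for $\tau$ large, and rewriting everything in terms of $u$ yields the stated estimate. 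I expect the pseudoconvexity and commutator positivity to be the main obstacle: tracking all lower-order terms and verifying that they are absorbed after the successive choices of $\theta$ and $\tau$ is the delicate bookkeeping, whereas the conjugation and the role of (\ref{equation:pseudoconvex}) are structurally transparent.
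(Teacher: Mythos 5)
The paper never proves this lemma: it is recalled as a known estimate from \cite{isakov2004carleman, isakov2006inverse}, so there is no internal proof to compare against. Your sketch reconstructs essentially the argument of those references: conjugation $w=e^{\tau\varphi}u$, splitting $L_\tau=S+A$ into symmetric and skew parts, the identity $\|L_\tau w\|^2=\|Sw\|^2+\|Aw\|^2+(w,[S,A]w)$ (valid here precisely because $u\in C_0^2(Q)$ kills all boundary terms), positivity of the commutator form via pseudoconvexity, and H\"ormander's exponentiation $\varphi=e^{\theta\psi}$. Your conjugation formula and your bracket computation are correct: with $p=s^2-c|\xi|^2$ one gets, on the characteristic set $s^2=c|\xi|^2$,
\begin{equation*}
\{p,\{p,\psi\}\}=8c|\xi|^2\Big(c-\varsigma-\tfrac12\nabla c\cdot(\v x-\v x_*)\Big)+8c\,(\xi\cdot\nabla c)\,\big(\xi\cdot(\v x-\v x_*)\big),
\end{equation*}
and hypothesis (\ref{equation:pseudoconvex}) makes the first term at least $8c|\xi|^2(c_0c-\varsigma)$, exactly as you say. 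Two caveats. First, the indefinite cross term is \emph{not} handled by the exponentiation trick alone: the positive term that $\varphi=e^{\theta\psi}$ contributes is proportional to $\theta\{p,\psi\}^2$, which vanishes identically on the set $\{p=0,\ \{p,\psi\}=0\}$ where weak pseudoconvexity of $\psi$ must be verified. On that set you must instead use the constraint $\varsigma st=-c\,\xi\cdot(\v x-\v x_*)$ together with $|s|=\sqrt{c}\,|\xi|$ and $|t|\le T$, which gives $|\xi\cdot(\v x-\v x_*)|\le \varsigma T|\xi|/\sqrt{c}$ and makes the cross term $O(\varsigma)$; the admissible window is therefore $\varsigma$ small depending on $T$ and $\|\nabla c\|_{\infty}$, not merely $\varsigma<c_0\min_{\overline\Omega}c$. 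This is a correctable imprecision, not a fatal one, since it only changes the constant in the upper constraint on $\varsigma$. Second, your observation that the two requirements on $\varsigma$ (small for pseudoconvexity, but $\varsigma T^2>\sup_{\overline\Omega}|\v x-\v x_*|^2$ for $\varphi(\cdot,\pm T)<1\le\varphi(\cdot,0)$) are compatible only when $T$ is sufficiently large is correct, and worth stating explicitly: it is an implicit hypothesis that the lemma as written in the paper suppresses.
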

\begin{remark}\label{remark:pseudo_convex}
For any  $\epsilon>0$ sufficiently small, there exists a constant $\delta$ such that $\varphi(\v x,t)>1-\epsilon$ for $|t|<\delta$ and 
$\varphi(\v x, t)<1-2\epsilon$ for $t>T-\delta$ or $t<-T+\delta$. We denote 
$$\varphi_0(\v x)=\varphi(\v x,0),\quad  \Phi=\max_{(\v x,t)\in Q}\varphi.$$
\end{remark}

For the Maxwell equations with $\sigma=0$, Carleman estimates can be found, for example, 
in \cite{isakov2006inverse, bellassoued2012inverse}.
The arguments in these references easily generalize to the case $\sigma\neq 0$.
\begin{lemma}\label{lemma:em_carleman}
Assume that $\alpha, \beta \in C^2(\overbar\Omega)$ and $\gamma \in C^1(\overbar{\Omega})$, such that
$\alpha,\beta >\alpha_0 >0$ and $\gamma \ge 0$. Assume additionally that
the wave speed $c:=\alpha\beta$ satisfies (\ref{equation:pseudoconvex}).
Then there exists a constant $C_0$ such that 
$$\int_Q e^{2\tau\varphi}\Big(\tau^3\big(|\v D|^2+|\v B|^2\big)+
                              \tau  \big(|\nabla_{\v x, t}\v D|^2+|\nabla_{\v x, t}\v B|^2\big)\Big)
\leq C_0\int_Q e^{2\tau\varphi}\big(|\v J_1|^2+|\v J_2|^2+|\nabla_{\v x, t}\v J_1|^2+|\nabla_{\v x, t}\v J_2|^2\big),$$
for all $\tau$ larger than a positive constant $\tau_0$ and for any $\v D, \v B\in C_0^2(Q)$ that solve
\begin{equation}
\left\{\begin{array}{lcc}
\partial_t \v D-\curl (\alpha\v B) +\gamma \v D &=& \v J_1 , \label{equation:em1}\\
\partial_t \v B+\curl (\beta \v D) &=& \v J_2 , \\
\div \v D=\div \v B = 0. & &
\end{array}\right.
\end{equation}
\end{lemma}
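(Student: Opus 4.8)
The plan is to decouple the first-order Maxwell system into two second-order wave equations, one for $\v D$ and one for $\v B$, each with principal part $\partial_t^2-c\Delta$ where $c=\alpha\beta$, and then to invoke Lemma~\ref{lemma:scalar_wave_carleman} componentwise. To obtain the equation for $\v D$, I would differentiate (\ref{equation:em1}) in $t$ and substitute $\partial_t\v B=\v J_2-\curl(\beta\v D)$ coming from the second equation, which gives
$$\partial_t^2\v D+\curl\big(\alpha\curl(\beta\v D)\big)=\partial_t\v J_1+\curl(\alpha\v J_2)-\gamma\partial_t\v D.$$
Note that no $\v B$ survives on the right-hand side, so the $\v D$-equation is self-contained. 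Using the identity $\curl\curl\v D=\nabla(\div\v D)-\Delta\v D$ together with the constraint $\div\v D=0$, one expands $\curl(\alpha\curl(\beta\v D))=-c\Delta\v D+R_{\v D}$, where $R_{\v D}$ collects only terms of order at most one in $\v D$, with coefficients built from $\nabla(\alpha\beta)$ and $\nabla(\alpha\nabla\beta)$; the hypothesis $\alpha,\beta\in C^2(\overbar\Omega)$ guarantees these coefficients are bounded. Hence each component $D_i$ solves a scalar wave equation $\partial_t^2 D_i-c\Delta D_i=f_i$ with $|f_i|\lesssim|\nabla_{\v x,t}\v J_1|+|\v J_2|+|\nabla_{\v x}\v J_2|+|\nabla_{\v x,t}\v D|+|\v D|$.

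Symmetrically, differentiating the second equation in $t$ and substituting $\partial_t\v D=\v J_1+\curl(\alpha\v B)-\gamma\v D$ yields
$$\partial_t^2\v B+\curl\big(\beta\curl(\alpha\v B)\big)=\partial_t\v J_2-\curl(\beta\v J_1)+\curl(\beta\gamma\v D),$$
whose principal part is again $-c\Delta\v B$ after invoking $\div\v B=0$. The one genuinely coupling term is $\curl(\beta\gamma\v D)$, which involves $\nabla\v D$; here $\gamma\in C^1(\overbar\Omega)$ ensures its coefficients are bounded. Thus each $B_i$ solves $\partial_t^2 B_i-c\Delta B_i=g_i$ with $|g_i|\lesssim|\nabla_{\v x,t}\v J_2|+|\v J_1|+|\nabla_{\v x}\v J_1|+|\nabla_{\v x,t}\v B|+|\v B|+|\nabla_{\v x}\v D|+|\v D|$.

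Since $\v D,\v B\in C_0^2(Q)$, every component lies in $C_0^2(Q)$ and Lemma~\ref{lemma:scalar_wave_carleman} applies. Summing the resulting estimates over the three components of $\v D$ and over those of $\v B$, then adding the two, I would obtain on the left-hand side the quantity $\int_Q e^{2\tau\varphi}\big(\tau^3(|\v D|^2+|\v B|^2)+\tau(|\nabla_{\v x,t}\v D|^2+|\nabla_{\v x,t}\v B|^2)\big)$, while the right-hand side consists of the desired source term $C_0\int_Q e^{2\tau\varphi}(|\v J_1|^2+|\v J_2|^2+|\nabla_{\v x,t}\v J_1|^2+|\nabla_{\v x,t}\v J_2|^2)$ together with the lower-order remainders $C_0\int_Q e^{2\tau\varphi}(|\v D|^2+|\v B|^2+|\nabla_{\v x,t}\v D|^2+|\nabla_{\v x,t}\v B|^2)$, the latter including the coupling contribution of $\curl(\beta\gamma\v D)$.

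The crux is then an absorption argument: because the left-hand side carries the weights $\tau^3$ on the zeroth-order terms and $\tau$ on the first-order terms, for all $\tau$ larger than some $\tau_0$ one has $C_0\le\tfrac12\tau$ and $C_0\le\tfrac12\tau^3$, so each remainder integral is dominated by half of the corresponding left-hand side contribution and can be moved across. I expect the main (though essentially routine) point to be bookkeeping: one must verify that every term appearing in $R_{\v D}$, in $R_{\v B}$, and in the coupling term $\curl(\beta\gamma\v D)$ is of order at most one in the fields, so that it is controlled by the gradient weight $\tau$ rather than by a second-order quantity the scalar estimate does not furnish. This is precisely what the regularity hypotheses $\alpha,\beta\in C^2$ and $\gamma\in C^1$ secure, and it is why the divergence-free constraints are indispensable, as they remove the $\nabla\div$ piece from the principal part. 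After absorption, the claimed estimate follows.
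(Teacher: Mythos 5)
Your proposal is correct and follows essentially the same route as the paper: differentiate in time, substitute to obtain decoupled second-order wave equations $\partial_t^2\v D-\alpha\beta\Delta\v D$ and $\partial_t^2\v B-\alpha\beta\Delta\v B$ (using $\curl\curl=\nabla\div-\Delta$ and the divergence-free constraints), with first-order remainders matching the paper's $\mathcal{R}_1,\mathcal{R}_2$ including the one-way coupling term $\curl(\beta\gamma\v D)$, then apply Lemma~\ref{lemma:scalar_wave_carleman} componentwise and absorb the lower-order terms by taking $\tau$ large. Your accounting of which regularity hypothesis controls which coefficient, and of why the $\v D$-equation is self-contained while the $\v B$-equation couples back to $\v D$, is exactly the structure of the paper's argument.
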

\begin{proof}
By substitution, the system can be transformed into the following two equations
\begin{eqnarray*}
\partial_t^2 \v D-\alpha\beta\Delta \v D &=& \partial_t\v J_1+\curl(\alpha\v J_2)-\mathcal{R}_1, \\
\partial_t^2 \v B-\alpha\beta\Delta \v B &=& \partial_t\v J_2-\curl(\beta \v J_1)-\mathcal{R}_2,
\end{eqnarray*}
where 
\begin{eqnarray*}
\mathcal{R}_1 &=& \nabla(\alpha\beta)\times\curl\v D+\curl(\alpha\nabla\beta\times\v D)+\gamma\partial_t \v D ,\\
\mathcal{R}_2 &=& \nabla(\alpha\beta)\times\curl\v B+\curl(\beta\nabla\alpha\times\v B)-\curl(\beta\gamma\v D) .
\end{eqnarray*}
Applying Lemma \ref{lemma:scalar_wave_carleman} to each component of the equations, we have 
$$\int_Q e^{2\tau\varphi}\Big(\tau^3|\v D|^2+\tau|\nabla_{\v x, t}\v D|^2\Big)
\leq C_0\int_Q e^{2\tau\varphi}\big(F+|\v D|^2+|\nabla_{\v x, t}\v D|^2\big),$$
$$\int_Q e^{2\tau\varphi}\Big(\tau^3|\v B|^2+\tau|\nabla_{\v x, t}\v B|^2\Big)
\leq C_0\int_Q e^{2\tau\varphi}\big(F+|\v D|^2+|\nabla_{\v x, t}\v D|^2+|\v B|^2+|\nabla_{\v x, t}\v B|^2\big),$$
where $F = |\v J_1|^2+|\v J_2|^2+|\nabla_{\v x, t}\v J_1|^2+|\nabla_{\v x, t}\v J_2|^2$.
Adding these two inequalities and taking $\tau$ large enough to absorb the right hand side terms completes the proof.
\end{proof}

Before deriving a Carleman estimate for the Biot equations, we study the property of the associated matrix of material coefficients.
Define
\begin{equation}\label{equation:a_c}
\rho_0 = \rho\rho_e-\rho_f^2,\quad 
\v a = \left(\begin{array}{cc}
a_{11} & a_{12}\\ a_{21} & a_{22}
\end{array}\right)
=\left(\begin{array}{cc}
\tfrac{\rho_0}{\rho_e} & \rho_f \\
                     0 & \rho_e
\end{array}\right)^{-1}
\left(\begin{array}{cc}
\lambda+G-\tfrac{\rho_f}{\rho_e}C   & C \\ C-\tfrac{\rho_f}{\rho_e}M & M 
\end{array}\right),\quad
c=\tfrac{\rho_e}{\rho_0}G.
\end{equation}
From the positive definite of the matrices (\ref{equation:spd}), we have $\rho_0>0$.
Let us denote
\begin{equation}\label{equation:a_tilde}
\tilde{\v a} = \left(\begin{array}{cc}
c+a_{11} & a_{12}\\ a_{21} & a_{22}
\end{array}\right)
\end{equation}
which can be expanded into
$$\tilde{\v a} = 
\left(\begin{array}{cc}
\tfrac{\rho_e}{\rho_0}(\lambda+2G)-2\tfrac{\rho_f}{\rho_0}C+\tfrac{\rho_f^2}{\rho_0\rho_e}M   & 
\tfrac{\rho_e}{\rho_0}C-\tfrac{\rho_f}{\rho_0}M \\ \tfrac{1}{\rho_e}C-\tfrac{\rho_f}{\rho_e^2}M & \tfrac{1}{\rho_e}M 
\end{array}\right).$$
The two eigenvalues of $\tilde{\v a}$ are 
$$\frac{(c+a_{11}+a_{22})\pm \sqrt{(c+a_{11}-a_{22})^2+4a_{12}a_{21}}}{2}.$$
Since 
$$a_{12}a_{21}=\tfrac{1}{\rho_0}\big(C-\tfrac{\rho_f}{\rho_e}M\big)^2\ge 0,$$
$\tilde{\v a}$ has two real eigenvalues.
The determinant of $\tilde{\v a}$ is
$$\det \tilde{\v a} = \tfrac{1}{\rho_0}\big((\lambda+2G)+\tfrac{\rho_f^2}{\rho_e^2}M-2\tfrac{\rho_f}{\rho_e}C\big)M
-\tfrac{1}{\rho_0}\big(C-\tfrac{\rho_f}{\rho_e}M\big)^2 = \tfrac{1}{\rho_0}\big(\lambda M-C^2+2GM\big)>0,$$
and its trace is
$$\tr \tilde{\v a} =\tfrac{1}{\rho_0}\big(\rho_e(\lambda+2G)+\rho M-2\rho_f C\big)\ge
\tfrac{\rho_f}{\rho_0}\big(\lambda+2G+M-2C\big).$$
From the positive definite of the matrices (\ref{equation:spd}), we have $(\lambda+M)^2 \ge 4\lambda M > 4C^2$ and hence
$\tr \tilde{\v a}>0$.
Therefore $\tilde{\v a}$ is similar to a diagonal matrix and it has two positive eigenvalues.

\vskip .3cm

In the following, we will derive a Carleman estimate for the Biot system (\ref{biot_u})-(\ref{biot_p}).
The idea is similar as with the Maxwell or the elastic system. We emphasize that the results from
\cite{bellassoued2013carleman} do not apply directly to our Biot system, which is different from the one treated in
that reference due to the presence of the term $\partial_t \v w$. 
We will explain in detail the difference in the proof of the following lemma.

\begin{lemma}\label{lemma:biot_carleman}
Assume that all the parameters in the Biot equations are in $C^3(\overbar{\Omega})$. 
Assume that $c=\tfrac{\rho_e}{\rho_0}G$ and two eigenvalues of the matrix $\tilde{\v a}$ given by (\ref{equation:a_tilde})
satisfy the condition (\ref{equation:pseudoconvex}). Then there exists a constant $C_0$ such that 
\begin{eqnarray*}
 &     & \int_Q e^{2\tau\varphi}\Big(\tau^3\big(|\v u|^2+|\div\v u|^2+|\div\v w|^2+|\curl\v u|^2\big)\\
 &	   &\quad+ \tau  \big(|\nabla_{\v x, t}\v u|^2+|\nabla_{\v x, t}(\div\v u)|^2+
 	|\nabla_{\v x, t}(\div\v w)|^2+|\nabla_{\v x, t}(\curl\v u)|^2\big)\Big) \\
 &\leq & C_0\int_Q e^{2\tau\varphi}\big(|\v F_1|^2+|\v F_2|^2+|\v D|^2+|\nabla\v F_1|^2+|\nabla\v F_2|^2+|\nabla\v D|^2\big),
\end{eqnarray*}
for all $\tau$ larger than a positive constant $\tau_0$ and for any $\v u, \v w\in C_0^3(Q)$ that solve
\begin{equation}\label{equation:biot_p1}
\left\{\begin{array}{lcc}
\rho  \partial^2_t\v u+\rho_f\partial^2_t\v w-\div \v \tau &=& \v F_1 ,\\
\rho_f\partial^2_t\v u+\rho_e\partial^2_t\v w+\nabla p    
	+\tfrac{\eta}{\kappa}\partial_t \v w-\xi\v D &=& \v F_2 ,\\
(\lambda\div \v u+C\div \v w)\v I+G(\nabla\v u+\nabla\v u^T) &=& \v \tau ,\\
C\div \v u+ M\div \v w &=& -p .
\end{array}\right.
\end{equation}
\end{lemma}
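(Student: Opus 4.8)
The plan is to follow the diagonalization strategy already used for the Maxwell system in Lemma~\ref{lemma:em_carleman}: turn (\ref{equation:biot_p1}) into a collection of scalar wave equations whose speeds are $c$ and the two eigenvalues of $\tilde{\v a}$, apply Lemma~\ref{lemma:scalar_wave_carleman} to each, and sum the resulting weighted inequalities. First I would substitute the constitutive relations, using
\begin{equation*}
\div\v\tau=(\lambda+G)\nabla(\div\v u)+G\Delta\v u+C\nabla(\div\v w),\qquad
\nabla p=-C\nabla(\div\v u)-M\nabla(\div\v w),
\end{equation*}
keeping every derivative that falls on a ($C^3$) coefficient as a lower order remainder. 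Eliminating $\partial_t^2\v w$ through the combination $(\text{first equation})-\tfrac{\rho_f}{\rho_e}(\text{second equation})$ and dividing by $\tfrac{\rho_0}{\rho_e}$ produces a vector equation for $\v u$ whose principal part is the scalar wave operator with speed $c=\tfrac{\rho_e}{\rho_0}G$,
\begin{equation*}
\partial_t^2\v u-c\,\Delta\v u=b_1\,\nabla(\div\v u)+b_2\,\nabla(\div\v w)+\tfrac{\rho_f}{\rho_e}\tfrac{\eta}{\kappa}\,\partial_t\v w+\v G,
\end{equation*}
with smooth functions $b_1,b_2$ and $\v G$ collecting $\v F_1,\v F_2,\xi\v D$ and lower order terms. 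Applying $\curl$ annihilates the two gradient terms and yields a scalar wave equation for $\curl\v u$ with the same speed $c$, while applying Lemma~\ref{lemma:scalar_wave_carleman} componentwise to the displayed equation itself will control $\v u$.

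To obtain the divergence part I would instead apply $\div$ to the two momentum equations directly. Writing $s=\div\v u$, $q=\div\v w$, $\v N=\left(\begin{array}{cc}\rho&\rho_f\\\rho_f&\rho_e\end{array}\right)$ and $\v S=\left(\begin{array}{cc}\lambda+2G&C\\C&M\end{array}\right)$, this gives
\begin{equation*}
\v N\,\partial_t^2\!\left(\begin{array}{c}s\\q\end{array}\right)-\v S\,\Delta\!\left(\begin{array}{c}s\\q\end{array}\right)+\tfrac{\eta}{\kappa}\,\partial_t\!\left(\begin{array}{c}0\\q\end{array}\right)=\left(\begin{array}{c}\div\v F_1\\\div\v F_2+\xi\div\v D\end{array}\right)+(\text{l.o.t.}),
\end{equation*}
hence $\partial_t^2(s,q)^{T}=\v N^{-1}\v S\,\Delta(s,q)^{T}+\cdots$. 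One checks that $\v N^{-1}\v S$ has the same trace and determinant as $\tilde{\v a}$, so it has the same two positive eigenvalues, which satisfy (\ref{equation:pseudoconvex}) by hypothesis. Diagonalizing $\v N^{-1}\v S$ pointwise by a smooth matrix $\v P(\v x)$ — its spectral projections are $C^2$ since the coefficients are $C^3$ — the unknowns $\v P(s,q)^{T}$ solve two scalar wave equations with speeds equal to those eigenvalues, up to first order commutator terms, and Lemma~\ref{lemma:scalar_wave_carleman} applies to each. The damping contribution $\tfrac{\eta}{\kappa}\partial_t q$ is only first order, so it is absorbed by the $\tau\,|\nabla_{\v x,t}(\div\v w)|^2$ term on the left once $\tau$ is large.

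The one genuinely new difficulty, and the reason the estimate of \cite{bellassoued2013carleman} does not transfer verbatim, is the source $\tfrac{\rho_f}{\rho_e}\tfrac{\eta}{\kappa}\partial_t\v w$ in the equations for $\v u$ and $\curl\v u$. Its irrotational part is harmless, since $\div(\partial_t\v w)=\partial_t q$ is already controlled by the previous step, but its rotational part $\partial_t\curl\v w$ is governed by no wave equation: taking the curl of the second momentum equation gives, pointwise in $\v x$, only a damped \emph{ordinary} differential equation in $t$,
\begin{equation*}
\rho_e\,\partial_t^2\curl\v w+\tfrac{\eta}{\kappa}\,\partial_t\curl\v w=-\rho_f\,\partial_t^2\curl\v u+\xi\curl\v D+\curl\v F_2+(\text{l.o.t.}),
\end{equation*}
driven by $\curl\v u$. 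Because $\v u,\v w\in C_0^3(Q)$ vanish near $t=\pm T$, this ODE can be integrated with zero data from the lateral ends, and I would bound $\int_Q e^{2\tau\varphi}|\partial_t\curl\v w|^2$ by $\int_Q e^{2\tau\varphi}|\partial_t\curl\v u|^2$ plus data, estimating the resulting time convolution with the help of the exponential damping kernel and the shape of $\varphi$ (Remark~\ref{remark:pseudo_convex}). Controlling this term against the time-asymmetric Carleman weight — the damping breaks the time reversal symmetry on which the rest of the argument rests — is the main obstacle, and is exactly the point where the presence of $\partial_t\v w$ has to be handled with care.

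Finally I would add the Carleman inequalities for $\v u$, for $\curl\v u$ and for $(\div\v u,\div\v w)$, insert the bound on $\partial_t\v w$, and let $\tau\ge\tau_0$ be large. Every coupling term on the right — the gradients $|\nabla(\div\v u)|^2,|\nabla(\div\v w)|^2$ produced by the substitution, the $|\partial_t\v u|^2,|\partial_t\curl\v u|^2$ coming from the damping, and the zeroth order terms $|\v u|^2,|\curl\v u|^2,|\div\v u|^2,|\div\v w|^2$ — carries a power of $\tau$ strictly below that of the matching term on the left, so all of them are absorbed, leaving only $\v F_1,\v F_2,\v D$ together with their first spatial derivatives. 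The compact support and the $C^3$ regularity guarantee that $s,q,\curl\v u$ and the diagonalized unknowns all lie in $C_0^2(Q)$, so that Lemma~\ref{lemma:scalar_wave_carleman} is legitimately applied throughout.
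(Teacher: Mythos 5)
Your overall architecture coincides with the paper's: a $c$-speed wave equation for $\v u$ and for $\curl\v u$, a coupled second-order system for the two divergences whose principal matrix has the eigenvalues of $\tilde{\v a}$ (your check that $\v N^{-1}\v S$ shares trace and determinant with $\tilde{\v a}$ is correct), and a damped ODE in time for the rotational part of the fluid unknown. You also point at the right difficulty. But your treatment of that difficulty --- the one step that distinguishes this lemma from \cite{bellassoued2013carleman} --- would fail. The weight $\varphi=e^{\theta(|\v x-\v x_*|^2-\varsigma t^2)}$ is maximal at $t=0$ and decreases in $|t|$. If you solve the damped ODE by Duhamel with zero data at the lateral ends $t=\pm T$, then for $|t|$ small the representation of $\partial_t\curl\v w(\cdot,t)$ integrates over times $s$ with $|s|>|t|$, where $\varphi(\v x,s)<\varphi(\v x,t)$; multiplying by $e^{\tau\varphi(\v x,t)}$ creates the factor $e^{\tau(\varphi(\v x,t)-\varphi(\v x,s))}$, which on a set of positive measure is bounded below by $e^{c\tau}$ with $c>0$ (and integrating backward from $+T$ even turns the damping kernel into a growing one). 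The fixed kernel $e^{-a(t-s)}\ge e^{-2aT}$ cannot offset a factor that grows with $\tau$, so your bound on $\int_Q e^{2\tau\varphi}|\partial_t\curl\v w|^2$ acquires a constant of size $e^{c\tau}$ and the final absorption argument, which needs constants uniform in $\tau$, collapses. This is why the paper integrates the ODE \emph{outward from $t=0$}, where $e^{\tau\varphi(\cdot,t)}\le e^{\tau\varphi(\cdot,s)}$ for all $|s|\le|t|$ holds along the entire path of integration (the paper in effect uses vanishing data at $t=0$, which is what the application to the inverse problem provides).

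There is a second defect: your ODE
\[
\rho_e\,\partial_t^2\curl\v w+\tfrac{\eta}{\kappa}\,\partial_t\curl\v w=-\rho_f\,\partial_t^2\curl\v u+\xi\curl\v D+\curl\v F_2+(\text{l.o.t.})
\]
is driven by $\partial_t^2\curl\v u$, a \emph{second} time derivative, which the Carleman estimate for $\curl\v u$ does not control (its left-hand side only carries $\tau|\nabla_{\v x,t}\curl\v u|^2$). To remove it you would integrate by parts in $s$ inside the Duhamel integral, and the boundary term at $s=t$ then forces you to estimate the combination $\partial_t\curl\bigl(\v w+\tfrac{\rho_f}{\rho_e}\v u\bigr)$ rather than $\partial_t\curl\v w$ alone. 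That is precisely the paper's change of unknown $\v v=\v w+\tfrac{\rho_f}{\rho_e}\v u$, made at the outset: with it the two inertial terms merge, so the equations for $\v v$ and $\curl\v v$ are damped ODEs whose right-hand sides contain only $\v G_2$, $\v D$, $\v u$ and first-order derivatives of $\div\v u$, $\div\v v$ --- all quantities the wave estimates do control --- and the weighted bound then follows from the monotonicity of $\varphi$ in $|t|$. Without this substitution (or the equivalent integration by parts, carried out with data at $t=0$), your scheme does not close.
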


\begin{proof}
Let $\v v = \v w+\tfrac{\rho_f}{\rho_e}\v u$
and replace $\v w$ by $\v u, \v v$ in the above system, to obtain
\begin{eqnarray*}
\tfrac{\rho_0}{\rho_e}\partial^2_t\v u+\rho_f\partial^2_t\v v-\div \v \tau &=& \v F_1 ,\\
\rho_e\partial^2_t\v v+\nabla p+\tfrac{\eta}{\kappa}\left(\partial_t \v v-\tfrac{\rho_f}{\rho_e}\partial_t\v u\right)
	-\xi\v D &=& \v F_2, \\
\Big(\big(\lambda-C\tfrac{\rho_f}{\rho_e}\big)\div \v u+C\div \v v
	-C\v u\cdot \nabla\tfrac{\rho_f}{\rho_e}\Big)\v I+G(\nabla\v u+\nabla\v u^T) &=& \v \tau ,\\
\big(C-M\tfrac{\rho_f}{\rho_e}\big)\div \v u+ M\div \v v - M\v u\cdot \nabla\tfrac{\rho_f}{\rho_e} &=& -p,
\end{eqnarray*}
where $\v I$ is the identity matrix of order 3.
After substitution of $\v \tau$ and $p$, we have
\begin{eqnarray}
\tfrac{\rho_0}{\rho_e}\partial^2_t\v u+\rho_f\partial^2_t\v v-G\Delta\v u-
	\left(\lambda+G-\tfrac{\rho_f}{\rho_e}C\right)\nabla\div\v u-C\nabla\div\v v 
	&=& \v F_1 +\mathcal{P}_1 \label{biot_u1},\\
\rho_e\partial^2_t\v v+\tfrac{\eta}{\kappa}\partial_t\v v-
	\left(C-\tfrac{\rho_f}{\rho_e}M\right)\nabla\div\v u-M\nabla\div\v v
	&=& \v F_2 +\mathcal{P}_2 \label{biot_w1},
\end{eqnarray}
where 
\begin{eqnarray*}
\mathcal{P}_1 &=& (\div\v u)\nabla\left(\lambda-\tfrac{\rho_f}{\rho_e}M\right)+
	(\div\v v)\nabla C+(\nabla\v u+\nabla\v u^T)\cdot\nabla G-\nabla\left(C\v u\cdot \nabla\tfrac{\rho_f}{\rho_e}\right),\\
\mathcal{P}_2 &=& (\div\v u)\nabla\left(C-\tfrac{\rho_f}{\rho_e}M\right)      +
	(\div\v v)\nabla M-\nabla\left(M\v u\cdot \nabla\tfrac{\rho_f}{\rho_e}\right)+\tfrac{\rho_f\eta}{\rho_e\kappa}\partial_t\v u+\xi\v D.
\end{eqnarray*}
Set $r=\div\v u, s=\div\v v, \v m=\curl \v u, \v n = \curl\v v$ and
$$\v K=\left(\begin{array}{cc}
\tfrac{\rho_0}{\rho_e}\v I & \rho_f \v I \\
       \v 0 & \rho_e \v I 
\end{array}\right)^{-1}.$$
We multiply the equation system (\ref{biot_u1})-(\ref{biot_w1}) by $\v K$ to obtain
\begin{eqnarray}
\partial^2_t\v u - c\Delta\v u -\tfrac{\rho_f\eta}{\rho_e\kappa}\partial_t\v v-a_{11}\nabla\div\v u -a_{12}\nabla\div\v v
	&=& \v G_1+\mathcal{P}_3 \label{biot_u3},\\
\partial^2_t\v v +\tfrac{\eta}{\rho_e\kappa}\partial_t\v v-a_{21}\nabla\div\v u -a_{22}\nabla\div\v v
	&=& \v G_2+\mathcal{P}_4 \label{biot_w3},
\end{eqnarray}
where 
$$c=\tfrac{\rho_e}{\rho_0}G,\ 
\left(\begin{array}{c}\v G_1\\ \v G_2\end{array}\right)=\v K  \left(\begin{array}{c}\v F_1\\ \v F_2\end{array}\right),\
\left(\begin{array}{c}\mathcal{P}_3 \\ \mathcal{P}_4\end{array}\right)=
	\v K \left(\begin{array}{c}\mathcal{P}_1 \\ \mathcal{P}_2\end{array}\right)$$
and $\v a$ is given by (\ref{equation:a_c}).
Note that $\mathcal{P}_3$ and $\mathcal{P}_4$ are composed of $r, s, \v u, \nabla_{\v x,t}\v u, \v D$.
The equations (\ref{biot_u3}) and (\ref{biot_w3}) can be rewritten as
\begin{eqnarray}
\partial^2_t\v u - c\Delta\v u 	&=& \v G_1+\mathcal{Q}_1 \label{biot_u4},\\
\partial^2_t\v v +\tfrac{\eta}{\rho_e\kappa}\partial_t\v v&=& \v G_2+\mathcal{Q}_2 \label{biot_w4},
\end{eqnarray}
where $\mathcal{Q}_1$ and $\mathcal{Q}_2$ are fist order differential operators that involve $r,s,\v u, \v D$. 
The operator $\mathcal{Q}_1$ also contains $\partial_t\v v$.
Taking the divergence on both sides of the equations (\ref{biot_u3}) and (\ref{biot_w3}) 
and with the help of the equality $\Delta\v u=\nabla r-\curl \v m$, we have
\begin{eqnarray}
\partial^2_t r -(c+a_{11})\Delta r-a_{12}\Delta s
	&=& \div\v G_1+\mathcal{S}_1 \label{biot_p3}, \\
\partial^2_t s -a_{21}\Delta r-a_{22}\Delta s
	&=& \div\v G_2+\mathcal{S}_2 \label{biot_q3},
\end{eqnarray}
where $\mathcal{S}_1$ and $\mathcal{S}_2$ are first order differential operators of $r, s, \v D, \v u, \v m$.
Besides, they also contain $\partial_t\v v$.
Taking the $\curl$ on both sides of the equations (\ref{biot_u3}) and (\ref{biot_w3}) gives
\begin{eqnarray}
\partial^2_t \v m -c\Delta \v m &=& \curl\v G_1+\mathcal{T}_1\label{biot_m},\\
\partial^2_t \v n +\tfrac{\eta}{\rho_e\kappa}\partial_t \v n &=& \curl\v G_2+\mathcal{T}_2\label{biot_n},
\end{eqnarray}
where $\mathcal{T}_1$ and $\mathcal{T}_2$ are first order differential operators of $r, s, \v D, \v u, \v m$.
The expression of $\mathcal{T}_1$ also involves the terms $\partial_t\v v, \partial_t\v n$ and $\mathcal{T}_2$ also contains $\partial_t\v v$.
\vskip .3cm
We emphasize that the presence of the terms $\partial_t\v v$ and $\partial_t \v n$ in the right-hand sides 
$\mathcal{Q}_1$ and $\mathcal{T}_1$ prevents us from using the Carleman estimate in 
\cite{bellassoued2013carleman} directly.
The control of $\mathcal{Q}_1$ and $\mathcal{T}_1$ requires an estimation of 
$\partial_t\v v$ and $\partial_t\v n$. This is actually why we change the variables
from $\v w$ to $\v v$.
Applying Lemma \ref{lemma:scalar_wave_carleman} to (\ref{biot_u4}) and (\ref{biot_m}) yield
\begin{eqnarray}
&    & \int_Q e^{2\tau\varphi}\Big(\tau^3|\v u|^2+\tau |\nabla_{\v x, t}\v u|^2\Big)\nonumber\\
&\leq& C_0\int_Q e^{2\tau\varphi}\big(|\v G_1|^2+|\v D|^2+|r|^2+|\nabla_{\v x,t}r|^2+|s|^2+|\nabla_{\v x,t}s|^2+|\partial_t \v v|^2\big),
	\label{carleman_u}\\
&    & \int_Q e^{2\tau\varphi}\Big(\tau^3|\v m|^2+\tau |\nabla_{\v x, t}\v m|^2\Big)\nonumber\\
&\leq& C_0\int_Q e^{2\tau\varphi}\big(|\curl\v G_1|^2+|\v D|^2+|\nabla\v D|^2+|\v u|^2+|\nabla_{\v x,t}\v u|^2\nonumber\\
&    &	+|r|^2+|\nabla_{\v x,t}r|^2+|s|^2+|\nabla_{\v x,t}s|^2+|\partial_t \v v|^2+|\partial_t \v n|^2\big).\label{carleman_m}
\end{eqnarray}
Further, applying Lemma 2.1 from \cite{bellassoued2013carleman} to (\ref{biot_p3}) and (\ref{biot_q3}), we have
\begin{eqnarray}
 & & \int_Q e^{2\tau\varphi}\Big(\tau^3\big(|r|^2+|s|^2\big)+\tau\big(|\nabla_{\v x, t} r|^2+|\nabla_{\v x, t} s|^2\big)\Big)\nonumber\\
 &\leq& C_0\int_Q e^{2\tau\varphi}\big(|\div\v G_1|^2+|\div\v G_2|^2+|\v D|^2+|\nabla\v D|^2\nonumber \\
 & &+|\v u|^2+|\nabla_{\v x,t}\v u|^2+|\v m|^2+|\nabla_{\v x,t}\v m|^2+|\partial_t \v v|^2\big).\label{carleman_rs}
\end{eqnarray}
Combining (\ref{carleman_u})-(\ref{carleman_rs}) shows that
\begin{eqnarray}
 & & \int_Q e^{2\tau\varphi}\Big(\tau^3\big(|r|^2+|s|^2+|\v u|^2+|\v m|^2\big)+
 	\tau\big(|\nabla_{\v x, t} r|^2+|\nabla_{\v x, t} s|^2+|\nabla_{\v x, t} \v u|^2+|\nabla_{\v x, t} \v m|^2\big)\Big)\nonumber\\
 &\leq& C_0\int_Q e^{2\tau\varphi}\big(|\v G_1|^2+|\nabla\v G_1|^2+|\v G_2|^2+|\nabla\v G_2|^2+|\v D|^2+|\nabla\v D|^2
	+|\partial_t \v v|^2+|\partial_t \v n|^2\big).\label{carleman_biot1}
\end{eqnarray}
Next, we estimate $\partial_t \v v$ and $\partial_t\v n$. Since the differential operator involved 
in (\ref{biot_w4})  acts only  on the variable $t$, we are able to derive the explicit 
 expression of $\partial_t \v v$, and obtain
\begin{eqnarray*}
\partial_t \v v =  \left(\int_{0}^te^{\int_{0}^s \tfrac{\eta}{\rho_e\kappa}} ( \v G_2+\mathcal{Q}_2 )ds\right)
e^{-\int_{0}^t \tfrac{\eta}{\rho_e\kappa}}.
\end{eqnarray*}
Multiplying both sides by $e^{\tau\varphi}$, and using the fact that $e^{\tau\varphi(\cdot, t)} \leq 
e^{\tau\varphi(\cdot, s)}$  for all $|s| \leq |t| $, we get 
\begin{eqnarray*}
|\partial_t \v v| e^{\tau\varphi} \leq \left(\int_{0}^{|t|}e^{\tau\varphi} e^{\int_{0}^s \tfrac{\eta}{\rho_e\kappa}} | \v G_2+\mathcal{Q}_2|ds\right)
e^{-\int_{0}^t \tfrac{\eta}{\rho_e\kappa}}.
\end{eqnarray*}

Taking the square of the previous relation, integrating over $Q$, and using the H\"older inequality, we finally  find

\begin{eqnarray} \label{ev}
\int_Q  e^{2\tau\varphi}  |\partial_t \v v|^2 \leq C_0
\int_Q e^{2\tau\varphi}
\left( |\v G_2|^2+|\mathcal{Q}_2|^2 \right).
\end{eqnarray}

Proceeding similarly for $\partial_t\v n$, shows that

\begin{eqnarray} \label{en}
\int_{Q}e^{2\tau\varphi} |\partial_t\v n|^2 \leq C_0\int_{Q}e^{2\tau\varphi}
\big(|\curl\v G_2|^2+|\mathcal{T}_2|^2\big)
\end{eqnarray} 
Therefore $\partial_t \v v$ and $\partial_t\v n$ are bounded by $r, s, \v D, \v u, \v m$ and their first order derivatives.
In fact,  multiplying by the weight $e^{2\tau\varphi}$ and integrating over $Q$, deteriorates the stability 
in determining $\partial_t \v v$  and  $\partial_t\v n$ from $r, s, \v D, \v u, \v m$ and their first order derivatives.\\

Considering now the obtained inequalities \eqref{ev} and  \eqref{en},
 the estimate (\ref{carleman_biot1}) becomes
\begin{eqnarray}
 &    & \int_Q e^{2\tau\varphi}\Big(\tau^3\big(|r|^2+|s|^2+|\v u|^2+|\v m|^2\big)+
 	\tau\big(|\nabla_{\v x,t} r|^2+|\nabla_{\v x,t} s|^2+|\nabla_{\v x,t} \v u|^2+
 	|\nabla_{\v x,t} \v m|^2\big)\Big)\nonumber\\
 &\leq& C_0\int_Q e^{2\tau\varphi}\big(|\v G_1|^2+|\nabla\v G_1|^2+|\v G_2|^2+
 	|\nabla\v G_2|^2+|\v D|^2+|\nabla\v D|^2\nonumber\\
 &	  & +|r|^2+|s|^2+|\v u|^2+|\v m|^2+|\nabla_{\v x,t} r|^2+|\nabla_{\v x,t} s|^2+
 	|\nabla_{\v x,t} \v u|^2+|\nabla_{\v x,t} \v m|^2\big).\label{carleman_biot2}
\end{eqnarray}
The lemma is completed by taking $\tau$ large enough to control the zero and first order terms of $r, s, \v u, \v m$ 
on the right hand side of (\ref{carleman_biot2}) and the relations between $\v w,\v G_1,\v G_2$ and $\v u, \v v, \v F_1, \v F_2$.
\end{proof}

Combining Lemma \ref{lemma:em_carleman} and \ref{lemma:biot_carleman}, yields a Carleman estimate for the electroseismic system.
\begin{theorem}\label{theorem:carleman_electroseismic}
Assume that all the parameters in the electroseismic system satisfy the hypotheses of Lemma \ref{lemma:em_carleman} and satisfy
(\ref{lemma:biot_carleman}). Then, there exists a constant $C_0$ such that
\begin{eqnarray*}
 &     & \int_Q e^{2\tau\varphi}\Big(\tau^3\big(|\v D|^2+|\v B|^2+|\v u|^2+|\div\v u|^2+|\div\v w|^2+|\curl\v u|^2\big)\\
 &	   &\quad +\tau  \big(|\nabla_{\v x,t}\v D|^2+|\nabla_{\v x,t}\v B|^2+|\nabla_{\v x,t}\v u|^2+
	|\nabla_{\v x,t}(\div\v u)|^2+|\nabla_{\v x,t}(\div\v w)|^2+|\nabla_{\v x,t}(\curl\v u)|^2\big)\Big) \\
 &\leq & C_0\int_Q e^{2\tau\varphi}\big(|\v F_1|^2+|\v F_2|^2+|\v J_1|^2+|\v J_2|^2+
 	|\nabla\v F_1|^2+|\nabla\v F_2|^2+|\nabla_{\v x,t}\v J_1|^2+|\nabla_{\v x,t}\v J_2|^2\big),
\end{eqnarray*}
for all $\tau$ larger than a positive constant $\tau_0$ and for any $\v D,\v B\in C^2_0(Q)$, $\v u, \v w\in C_0^3(Q)$ 
that solve (\ref{equation:em1}) and (\ref{equation:biot_p1}).

\end{theorem}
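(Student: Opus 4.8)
The plan is to simply add the two Carleman estimates of Lemmas~\ref{lemma:em_carleman} and~\ref{lemma:biot_carleman}, exploiting the one-directional coupling of the electroseismic system: the Maxwell subsystem is independent of the Biot variables, whereas the Biot subsystem is driven by $\v D$ only through the coupling source $\xi\v D$. Concretely, I would first invoke Lemma~\ref{lemma:em_carleman} to bound $\v D,\v B$ and their space-time gradients by $\v J_1,\v J_2$, producing on the left the gain $\int_Q e^{2\tau\varphi}\big(\tau^3(|\v D|^2+|\v B|^2)+\tau(|\nabla_{\v x,t}\v D|^2+|\nabla_{\v x,t}\v B|^2)\big)$. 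I would then invoke Lemma~\ref{lemma:biot_carleman} to bound $\v u,\div\v u,\div\v w,\curl\v u$ and their gradients; note that its right-hand side contains not only $\v F_1,\v F_2$ and their gradients but also the coupling terms $|\v D|^2+|\nabla\v D|^2$, carrying only $O(1)$ weights in $\tau$.

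Adding the two inequalities, the left-hand side is exactly the quantity appearing in the statement of Theorem~\ref{theorem:carleman_electroseismic}, while the right-hand side consists of the desired source terms $|\v F_1|^2+|\v F_2|^2+|\v J_1|^2+|\v J_2|^2$ together with their gradients, plus the unwanted coupling remainder $C_0\int_Q e^{2\tau\varphi}(|\v D|^2+|\nabla\v D|^2)$. The key step is to absorb this remainder into the left-hand side. Since $|\nabla\v D|\le|\nabla_{\v x,t}\v D|$, we have
\[
C_0\int_Q e^{2\tau\varphi}\big(|\v D|^2+|\nabla\v D|^2\big)
\le \frac{C_0}{\tau}\int_Q e^{2\tau\varphi}\big(\tau^3|\v D|^2+\tau|\nabla_{\v x,t}\v D|^2\big),
\]
and the integral on the right is part of the gain already produced by Lemma~\ref{lemma:em_carleman}. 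Choosing $\tau$ larger than a suitable $\tau_0$ (and larger than the thresholds of both lemmas) makes the prefactor $C_0/\tau$ smaller than, say, $\tfrac12$, so the coupling remainder is dominated by half of the $\v D$-contribution on the left and can be transferred there. This yields the asserted estimate; observe that no absorption is needed for $\v B$, whose gain on the left is free since $\v B$ never appears as a source in the Biot estimate.

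I expect the only genuine subtlety, and hence the main point to verify, is that a single weight function $\varphi$ can be used in both lemmas simultaneously. This requires that the three relevant wave speeds---$\alpha\beta$ for the Maxwell part and $c=\tfrac{\rho_e}{\rho_0}G$ together with the two eigenvalues of $\tilde{\v a}$ for the Biot part---all satisfy the pseudoconvexity condition~\eqref{equation:pseudoconvex} with a common point $\v x_*$ and a common constant $c_0$, so that the construction of Lemma~\ref{lemma:scalar_wave_carleman} furnishes one $\varphi=e^{\theta\psi}$ valid for every scalar equation. Under the hypotheses of the theorem this is guaranteed, and the remaining argument is the elementary $\tau$-absorption described above; no further estimate of the Biot or Maxwell fields is needed.
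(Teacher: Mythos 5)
Your proposal is correct and matches the paper's approach: the paper gives no detailed proof, simply stating that Theorem \ref{theorem:carleman_electroseismic} follows by ``combining'' Lemmas \ref{lemma:em_carleman} and \ref{lemma:biot_carleman}, and the implicit argument is exactly your addition of the two estimates followed by absorption of the $O(1)$-weighted coupling terms $|\v D|^2+|\nabla\v D|^2$ into the $\tau^3$ and $\tau$ gains from the Maxwell estimate for large $\tau$. Your remark that a single weight $\varphi$ must serve all the wave speeds (Maxwell's $\alpha\beta$, and the Biot speeds $c$ and the eigenvalues of $\tilde{\v a}$) is precisely what the theorem's hypotheses guarantee, and is handled the same way, implicitly, in the paper.
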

\section{The inverse problem}\label{section:inverse_problem}
We now state our main result: a stability theorem for the inverse problem.
\begin{theorem}\label{theorem:inverse_problem}
Let $(\alpha_1, \beta_1, \gamma_1, \xi_1)$ and $(\alpha_2, \beta_2, \gamma_2, \xi_2)$ denote two sets of parameters, which satisfy
the hypotheses of Theorem \ref{theorem:carleman_electroseismic}. Assume that these two sets of parameters coincide in a set 
$\overbar\omega$ where $\omega\subset\Omega$ is a neighborhood of $\partial\Omega$.
Let $(\v D^{(1)}_0, \v B^{(1)}_0)$ and $(\v D^{(2)}_0, \v B^{(2)}_0)$ denote two sets of initial values, such that the matrix
$\v M(\v x)$ defined by
\begin{equation*}\label{equation:M}
\v M(\v x) =
\left(\begin{array}{ccccccc}
\v e_1\times\v B_0^{(1)} & \v e_2\times\v B_0^{(1)} & \v e_3\times\v B_0^{(1)} & -\v D_0^{(1)} & \v 0 & \v 0 & \v 0\\
\v 0 & \v 0 & \v 0 & \v 0 & -\v e_1\times\v D_0^{(1)} & -\v e_2\times\v D_0^{(1)} & -\v e_3\times\v D_0^{(1)} \\
\v e_1\times\v B_0^{(2)} & \v e_2\times\v B_0^{(2)} & \v e_3\times\v B_0^{(2)} & -\v D_0^{(2)} & \v 0 & \v 0 & \v 0\\
\v 0 & \v 0 & \v 0 & \v 0 & -\v e_1\times\v D_0^{(2)} & -\v e_2\times\v D_0^{(2)} & -\v e_3\times\v D_0^{(2)}
\end{array}\right)
\end{equation*} 
has a nonzero $7\times 7$ minor on $\Omega$. Here
$$\v e_1 = \left(\begin{array}{ccc}1 & 0 & 0\end{array}\right),\ \  
\v e_2 = \left(\begin{array}{ccc}0 & 1 & 0\end{array}\right), \ \ 
\v e_3 = \left(\begin{array}{ccc}0 & 0 & 1\end{array}\right).$$
Assuming the following regularity 
$$\v D_k^{(j)},\v B_k^{(j)}\in C^5(Q),\quad \v u_k^{(j)},\v w_k^{(j)}\in C^6(Q)\quad j = 1, 2,$$
of the  solutions to the system (\ref{em_d})-(\ref{em_div}), where 
$\v v_k^{(j)}$ represents the field $\v v$ corresponding to the parameters $(\alpha_k,\beta_k,\gamma_k,\xi_k)$ 
and the $j$-th initial values.
Then, there exist constants $C_0$ and $c_0\in(0,1)$ such that
\begin{equation*}
\int_\Omega\Lambda \le C_0\big(\mathfrak{O}^{(1)}+\mathfrak{O}^{(2)}\big)^{c_0}
\end{equation*}
where 
\begin{eqnarray*}
\Lambda &=& \tilde\Lambda+|\xi|^2+|\nabla\xi|^2,\\
\tilde{\Lambda} &=& |\alpha|^2+|\beta|^2+|\gamma|^2+|\nabla\alpha|^2+|\nabla\beta|^2+|\nabla\gamma|^2+|\nabla\nabla\alpha|^2+|\nabla\nabla\beta|^2,\\
\mathfrak{O}^{(j)} &=& \|\v D^{(j)}\|_{H^4(Q_\omega)}^2+\|\v B^{(j)}\|_{H^4(Q_\omega)}^2+
	\|\v u^{(j)}\|_{H^5(Q_\omega)}^2+\|\v w^{(j)}\|_{H^5(Q_\omega)}^2,
\end{eqnarray*}
and $$\alpha = \alpha_2-\alpha_1,\ \beta=\beta_2-\beta_1,\ \gamma=\gamma_2-\gamma_1,\ \xi=\xi_2-\xi_1,$$
$$\v D= \v D_2-\v D_1,\ \v B= \v B_2-\v B_1,\ \v u= \v u_2-\v u_1,\ \v w= \v w_2-\v w_1 .$$

\end{theorem}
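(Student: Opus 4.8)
The plan is to use the Bukhgeim--Klibanov method, reducing the identification of $(\alpha,\beta,\gamma,\xi)$ to the Carleman estimate of Theorem~\ref{theorem:carleman_electroseismic} applied to the difference fields, and then extracting the parameters from the Cauchy data at $t=0$ through the matrix $\v M$. First I would subtract the two electroseismic systems. Writing $\alpha_2\v B_2-\alpha_1\v B_1=\alpha_2\v B+\alpha\v B_1$ and likewise for the other products, the differences $(\v D,\v B,\v u,\v w)$ solve (\ref{equation:em1})--(\ref{equation:biot_p1}) for the parameters $(\alpha_2,\beta_2,\gamma_2,\xi_2)$ with right-hand sides $\v J_1=\curl(\alpha\v B_1)-\gamma\v D_1$, $\v J_2=-\curl(\beta\v D_1)$, $\v F_1=\v 0$ and $\v F_2=\xi\v D_1$. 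The decisive structural features are that these sources are linear in $(\alpha,\beta,\gamma,\xi)$ multiplied by the \emph{known} first solution, that their parameter factors are independent of $t$, and that they vanish on $Q_\omega$ because the parameters coincide on $\overbar\omega$.

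The core of the argument is the evaluation at $t=0$. Since both solutions share the same Cauchy data, the differences satisfy $\v D(\cdot,0)=\v B(\cdot,0)=\v 0$ and $\v u(\cdot,0)=\v w(\cdot,0)=\partial_t\v u(\cdot,0)=\partial_t\v w(\cdot,0)=\v 0$. Reading the difference equations at $t=0$ then gives
\begin{equation*}
\partial_t\v D(\cdot,0)=\nabla\alpha\times\v B_0+\alpha\,\curl\v B_0-\gamma\v D_0,\qquad \partial_t\v B(\cdot,0)=-\nabla\beta\times\v D_0-\beta\,\curl\v D_0,
\end{equation*}
while the two Biot equations collapse to $\rho_f\partial_t^2\v u(\cdot,0)+\rho_e\partial_t^2\v w(\cdot,0)=\xi\v D_0$, all remaining terms vanishing by the zero Cauchy data. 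Stacking the Maxwell relations for the two initial data sets and using $\nabla\alpha\times\v B_0=\sum_i\partial_i\alpha\,(\v e_i\times\v B_0)$ yields precisely the linear system with coefficient matrix $\v M$ acting on $(\partial_1\alpha,\partial_2\alpha,\partial_3\alpha,\gamma,\partial_1\beta,\partial_2\beta,\partial_3\beta)$. The hypothesis that $\v M$ has a nonvanishing $7\times7$ minor on $\Omega$ makes this system left-invertible, so pointwise one controls $|\nabla\alpha|^2+|\gamma|^2+|\nabla\beta|^2$ by $\sum_j(|\partial_t\v D^{(j)}(\cdot,0)|^2+|\partial_t\v B^{(j)}(\cdot,0)|^2)$ up to the lower order $|\alpha|^2+|\beta|^2$; differentiating the same identities once in $\v x$ and reusing the minor controls $|\nabla\nabla\alpha|^2+|\nabla\nabla\beta|^2+|\nabla\gamma|^2$, and the nonvanishing of the $-\v D_0^{(j)}$ columns of $\v M$ forces $|\v D_0^{(j)}|$ to be bounded below on $\Omega$, so that $\xi$ (then $\nabla\xi$) is recovered from $\partial_t^2\v u(\cdot,0),\partial_t^2\v w(\cdot,0)$.

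Next I would quantify these pointwise-at-$t=0$ bounds with the Carleman estimate. Multiplying the differences by a cutoff $\chi$ equal to $1$ on a large interior part of $Q$ and with $\nabla\chi$ supported in $Q_\omega\cup\{|t|>T-\delta\}$, I apply Theorem~\ref{theorem:carleman_electroseismic} to the once- and twice-$t$-differentiated cutoff systems; this is where the $C^5,C^6$ regularity is needed, to bound $|\nabla_{\v x,t}\partial_t\v J_i|$, $\partial_t^2\v F_2$ and the analogous higher derivatives in terms of the observation norms $H^4(Q_\omega)$, $H^5(Q_\omega)$. The identity $|g(\v x,0)|^2e^{2\tau\varphi_0}=-\int_0^T\partial_t\big(|g|^2e^{2\tau\varphi}\big)\,dt+|g(\v x,T)|^2e^{2\tau\varphi(\cdot,T)}$, used for $g=\partial_t\v D,\partial_t\v B,\partial_t^2\v u,\partial_t^2\v w$, turns the $t=0$ data into spacetime integrals dominated by the Carleman left-hand side. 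Because the parameter factors do not depend on $t$ and $\varphi$ peaks at $t=0$, one has $\int_{-T}^{T}e^{2\tau\varphi}\,dt\le C_0\tau^{-1}e^{2\tau\varphi_0}$, so the whole source side of the estimate is bounded by $C_0\tau^{-2}\int_\Omega\Lambda\,e^{2\tau\varphi_0}$. The residual value terms $|\alpha|^2,|\beta|^2$ are absorbed by a first-order (gradient) Carleman inequality with the same weight, which is available because $\varphi_0$ has no critical point in $\overbar\Omega$ and $\alpha,\beta$ vanish on $\omega$, and which again supplies a factor $\tau^{-2}$.

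Collecting these estimates and taking $\tau$ large to absorb all $\tau^{-2}$ contributions into the left-hand side leaves
\begin{equation*}
\int_\Omega\Lambda\,e^{2\tau\varphi_0}\le C_0\Big(e^{2\tau\Phi}\big(\mathfrak{O}^{(1)}+\mathfrak{O}^{(2)}\big)+e^{2\tau(1-2\epsilon)}M_0\Big),
\end{equation*}
where the first term gathers the commutators of $\nabla\chi$ supported in $Q_\omega$ (hence bounded by the observations with the maximal weight $\Phi$) and the second gathers the contributions near $t=\pm T$, estimated through Remark~\ref{remark:pseudo_convex} by $\varphi<1-2\epsilon$ and an a priori bound $M_0$ on $\Lambda$ coming from the assumed regularity. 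Using $\varphi_0\ge1$ on the left gives $\int_\Omega\Lambda\le C_0(e^{2\tau(\Phi-1)}\mathfrak{O}+e^{-4\tau\epsilon}M_0)$ with $\mathfrak{O}=\mathfrak{O}^{(1)}+\mathfrak{O}^{(2)}$; balancing the two exponentials (legitimate when $\mathfrak{O}<M_0$, the opposite case being trivial from the a priori bound) yields the claimed H\"older estimate with $c_0=2\epsilon/(\Phi-1+2\epsilon)\in(0,1)$, the ordinary Poincar\'e inequality finally restoring $|\alpha|^2,|\beta|^2$ from the controlled gradients. The main obstacle is the self-absorption just described: the Carleman right-hand side is itself bounded by the very weighted integral $\int_\Omega\Lambda\,e^{2\tau\varphi_0}$ being estimated, and the argument closes only thanks to the two independent gains of $\tau^{-1}$ --- one from the Carleman weight, one from the $t$-integral of $e^{2\tau\varphi}$ --- while keeping the derivative bookkeeping of the matrix inversion consistent with the $H^4/H^5$ observation norms.
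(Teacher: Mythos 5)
Your proposal follows the same Bukhgeim--Klibanov route as the paper (same difference system and sources, same $t=0$ evaluation and matrix $\v M$, same cutoff-plus-Carleman machinery, same absorption of $\int_Q e^{2\tau\varphi}\Lambda$ and exponential balancing), but one step fails as written: the recovery of $\xi$ and $\nabla\xi$. You state that the two Biot equations ``collapse to $\rho_f\partial_t^2\v u(\cdot,0)+\rho_e\partial_t^2\v w(\cdot,0)=\xi\v D_0$'' and then recover $\xi$ from $\partial_t^2\v u(\cdot,0)$ \emph{and} $\partial_t^2\v w(\cdot,0)$, feeding $g=\partial_t^2\v w$ into your $t$-integration identity. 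But the Carleman estimate of Theorem \ref{theorem:carleman_electroseismic} controls only $\v D,\v B,\v u,\div\v u,\div\v w,\curl\v u$ and their first-order derivatives; it gives no weighted control of $\v w$ itself, nor of $\partial_t^j\v w$. This is not an accident of presentation: in the proof of Lemma \ref{lemma:biot_carleman} the variable $\v v=\v w+\tfrac{\rho_f}{\rho_e}\v u$ satisfies only the first-order ODE in $t$ (\ref{biot_w4}), not a wave equation, so no such estimate exists with these tools. Consequently the spacetime integrals $\int e^{2\tau\varphi}|\partial_t^2\v w|^2$ and $\int e^{2\tau\varphi}|\partial_t^3\v w|^2$ produced by your identity are \emph{not} dominated by the Carleman left-hand side, and your estimate for $\xi$ does not close. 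The paper avoids this by using both collapsed Biot equations: the first, $\rho\,\partial_t^2\v u(\v x,0)+\rho_f\partial_t^2\v w(\v x,0)=\v 0$, eliminates $\partial_t^2\v w(\v x,0)$ and yields $\v D_0^{(j)}\xi=-\rho_1\partial_t^2\v u^{(j)}(\v x,0)$ with $\rho_1=\tfrac{\rho_0}{\rho_f}$, so that only $\partial_t^2\v u^{(j)}$ and $\nabla\partial_t^2\v u^{(j)}$ --- quantities the Carleman estimate does control --- enter the argument. The fix is one line, but it is indispensable, and your phrasing conflates the two distinct equations whose combination makes it possible.

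A second, smaller slip: you apply the Carleman estimate only to the once- and twice-$t$-differentiated cutoff systems. For $\nabla\xi$ you need $\nabla\partial_t^2\v u^{(j)}(\cdot,0)$, and your identity then demands weighted control of $\nabla\partial_t^3\v u^{(j)}$, which sits in the gradient term of the Carleman estimate applied to the \emph{thrice}-differentiated system; this is exactly why the paper takes $j=1,2,3$ in (\ref{equation:modified_carlemanj}). Apart from these two points, your architecture matches the paper's proof up to equivalent reformulations: your explicit identity at $t=0$ plays the role of Lemma \ref{lemma:lemma2}, your weighted gradient inequality for functions vanishing near $\partial\Omega$ plays the role of Lemma \ref{lemma:lemma1} applied to $\v v=\nabla\alpha$, and your explicit a priori bound $M_0$ is absorbed into the paper's constant $C_0$.
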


\begin{remark}
If we choose $\v B^{(1)}_0=\v e_1$, $\v D^{(1)}_0=\v e_2$, $\v B^{(2)}_0=\v D^{(2)}_0=\v e_3$, the matrix $\v M(\v x)$ formed by
rows (2,3,4,5,8,9,10) and by all the columns of $\v M(\v x)$ is nonsingular. The assumption on the regularity of the
solutions is required to apply the  Carleman estimate to the electroseismic system. 
\end{remark}

\begin{remark}
From the structure of $\v M(\v x)$, the existence of a nonzero $7\times 7$ minor indicates that
there exists a positive constant $c_*$ such that 
$|\v B^{(1)}_0|^2+|\v B^{(2)}_0|^2>c_*$ and $|\v D^{(1)}_0|^2+|\v D^{(2)}_0|^2>c_*$.
\end{remark}

We prove Theorem \ref{theorem:inverse_problem} in 3 steps in the following subsections.

\subsection{A modified Carleman estimate}\label{subsection:modify_carleman}
Since our Carleman estimate is applicable for functions compactly supported in Q,
in the first step we cut off the functions. The near boundary part corresponds to the
measurements and the inner part can be bounded by the Carleman estimate.
The fields $(\v D,\v B,\v u,\v w)$ satisfy the following system of equations in $Q$
\begin{eqnarray}
\partial_t \v D-\curl (\alpha_2\v B) +\gamma_2 \v D &=& \curl(\alpha\v B_1)-\gamma\v D_1 ,\label{em_d_diff}\\
\partial_t \v B+\curl (\beta_2 \v D) &=&  -\curl(\beta\v D_1) , \label{}\\
\rho  \partial^2_t\v u+\rho_f\partial^2_t\v w-\div \v \tau &=& \v 0 \label{} ,\\
\rho_f\partial^2_t\v u+\rho_e\partial^2_t\v w+\nabla p    
	+\tfrac{\eta}{\kappa}\partial_t \v w-\xi_2\v D &=& \xi\v D_1 \label{biot_w_diff},
\end{eqnarray}
with zero initial conditions.
Define $\chi(\v x,t)=\chi_1(\v x)\chi_2(t)$ with $\chi_1\in C^{\infty}_0(\Omega)$, $\chi_2\in C^{\infty}_0(-T,T)$,
$0\le \chi_1,\chi_2 \le 1$ and 
$$\chi_1=1 \text{ in } \overbar{\Omega_0}, \quad \chi_2=1 \text{ in }  [-T+\delta,T-\delta],$$
where $\delta$ is chosen as in Remark \ref{remark:pseudo_convex} and $\Omega_0=\Omega\setminus\overbar{\omega}$.
Denote
$\tilde{\v D}=\chi\v D, \tilde{\v B}=\chi\v B,\tilde{\v u}=\chi\v u,\tilde{\v w}=\chi\v w,$ then
\begin{eqnarray}
\partial_t \tilde{\v D}-\curl (\alpha_2 \tilde{\v B}) +\gamma_2 \tilde{\v D} &=& 
	\chi\big(\curl(\alpha\v B_1)-\gamma\v D_1\big)+\mathcal{P}_1 ,\label{em_d_t}\\
\partial_t \tilde{\v B}+\curl (\beta_2  \tilde{\v D}) &=& -\chi\curl(\beta\v D_1)+\mathcal{P}_2 , \label{}\\
\rho  \partial^2_t\tilde{\v u}+\rho_f\partial^2_t\tilde{\v w}-\div \tilde{\v \tau} &=& \mathcal{P}_3 \label{} ,\\
\rho_f\partial^2_t\tilde{\v u}+\rho_e\partial^2_t\tilde{\v w}+\nabla \tilde{p}    
	+\tfrac{\eta}{\kappa}\partial_t \tilde{\v w}-\xi_2\tilde{\v D} &=& \chi\xi\v D_1+\mathcal{P}_4 \label{biot_w_t},
\end{eqnarray}
where 
$$\mathcal{P}_1=(\partial_t\chi)\v D-\nabla\chi\times(\alpha_2\v B), \quad \mathcal{P}_2=(\partial_t\chi)\v B+\nabla\chi\times(\beta_2\v D),$$
$\mathcal{P}_3,\mathcal{P}_4$ first order differential operators in $\v u, \v w$. Let us note that 
$\mathcal{P}_1, \mathcal{P}_2, \mathcal{P}_3, \mathcal{P}_4$ vanish in 
$Q_0(\delta) = \Omega_0\times(-T+\delta, T-\delta)$. 
Applying Theorem \ref{theorem:carleman_electroseismic} to (\ref{em_d_t})-(\ref{biot_w_t}), we have
\begin{eqnarray}\label{equation:carleman_electroseismic_boundary}
 &     & \int_Q e^{2\tau\varphi}\Big(\tau^3\big(|\tilde{\v D}|^2+|\tilde{\v B}|^2+|\tilde{\v u}|^2\big)
 	+\tau  \big(|\nabla_{\v x, t}\tilde{\v D}|^2+|\nabla_{\v x, t}\tilde{\v B}|^2+|\nabla_{\v x, t}\tilde{\v u}|^2\big)\Big) \nonumber\\
 &\leq & C_0\int_Q e^{2\tau\varphi}\Lambda +C_0\int_{Q_\omega} e^{2\tau\varphi}\Pi+
 	C_0\int_{\Omega\times(-T,-T+\delta)} e^{2\tau\varphi}\Pi+C_0\int_{\Omega\times(T-\delta,T)} e^{2\tau\varphi}\Pi,
\end{eqnarray}
where
\begin{eqnarray*}
\Pi     & = & |\v D|^2+|\v B|^2+|\v u|^2+|\v w|^2+|\nabla_{\v x, t}\v D|^2 +|\nabla_{\v x, t}\v B|^2
   	+|\nabla_{\v x, t}\v u|^2+|\nabla_{\v x, t}\v w|^2+|\nabla\nabla_{\v x, t}\v u|^2+|\nabla\nabla_{\v x, t}\v w|^2.
\end{eqnarray*}
Then from (\ref{equation:carleman_electroseismic_boundary})
and Remark \ref{remark:pseudo_convex}, we have
\begin{eqnarray}\label{equation:modified_carleman}
 &     & \int_{Q_0(\delta)} e^{2\tau\varphi}\Big(\tau^3\big(|{\v D}|^2+|{\v B}|^2+|{\v u}|^2\big)
	+\tau  \big(|\nabla_{\v x, t}{\v D}|^2+|\nabla_{\v x, t}{\v B}|^2+|\nabla_{\v x, t}{\v u}|^2\big)\Big) \nonumber\\
 &\leq & C_0\int_Q e^{2\tau\varphi}\Lambda +C_0 e^{2\tau \Phi} \mathfrak{O}+ C_0 e^{2\tau(1-2\epsilon)}.
\end{eqnarray}
Similarly,  taking the derivative with respect to $t$ on both sides of (\ref{em_d_diff})-(\ref{biot_w_diff}) yields the following inequalities
\begin{eqnarray}\label{equation:modified_carlemanj}
 &     & \int_{Q_0(\delta)} e^{2\tau\varphi}\Big(\tau^3\big(|\partial_t^j{\v D}|^2+|\partial_t^j{\v B}|^2+|\partial_t^j{\v u}|^2\big)
	+\tau  \big(|\nabla_{\v x, t}{\partial_t^j\v D}|^2+|\nabla_{\v x, t}{\partial_t^j\v B}|^2+|\nabla_{\v x, t}{\partial_t^j\v u}|^2\big)\Big) \nonumber\\
 &\leq & C_0\int_Q e^{2\tau\varphi}\Lambda +C_0 e^{2\tau\Phi} \mathfrak{O}+ C_0 e^{2\tau(1-2\epsilon)},
\end{eqnarray}
for $j=1, 2, 3.$
\subsection{Bounding parameters by initial values}\label{subsection:bound_boundary}

Letting $t$ goes to 0 in (\ref{em_d_diff})-(\ref{biot_w_diff}) shows that
\begin{eqnarray}
\partial_t \v D(\v x, 0) &=&  \curl(\alpha\v B_0)-\gamma\v D_0 \label{initial_D},\\
\partial_t \v B(\v x, 0) &=& -\curl(\beta\v D_0) , \label{initial_B} \\
\rho  \partial^2_t\v u(\v x, 0)+\rho_f\partial^2_t\v w(\v x, 0) &=& \v 0 \label{initial_u} ,\\
\rho_f\partial^2_t\v u(\v x, 0)+\rho_e\partial^2_t\v w(\v x, 0) &=& \xi\v D_0 .\label{initial_w}
\end{eqnarray}
Expanding the $\curl$ in (\ref{initial_D}) and (\ref{initial_B}) yields
\begin{eqnarray*}
\nabla\alpha\times\v B_0+\alpha\curl\v B_0-\gamma\v D_0 &=& \partial_t \v D(\v x, 0),\\
-\nabla\beta\times\v D_0-\beta \curl\v D_0 &=& \partial_t \v B(\v x, 0).
\end{eqnarray*}
Substituting (\ref{initial_u}) into (\ref{initial_w}) to eliminate $\v w$ gives
$$\v D_0\xi = -\rho_1\partial_t^2\v u(\v x,0),$$ where $\rho_1 = \tfrac{\rho_0}{\rho_f}$.
Considering the two sets of initial values, we have
\begin{equation}\label{equation:parameter}
\v M(\v x)\left(\begin{array}{c}
\nabla\alpha \\ \gamma \\ \nabla\beta
\end{array}\right) = 
\v N(\v x)\left(\begin{array}{c}
\alpha \\ \beta
\end{array}\right) + \v b(\v x),
\end{equation}
\begin{equation}\label{equation:para_xi}
\v D_0^{(j)}\xi = -\rho_1\partial_t^2\v u^{(j)}(\v x,0),
\end{equation}
where
$$\v N(\v x) = 
\left(\begin{array}{cc}
-\curl\v B_0^{(1)} & \v 0\\
\v 0 & \curl\v D_0^{(1)} \\
-\curl\v B_0^{(2)} & \v 0\\
\v 0 & \curl\v D_0^{(2)}
\end{array}\right),
 \ \
\v b(\v x) = 
\left(\begin{array}{c}
\partial_t\v D^{(1)}(\v x, 0) \\
\partial_t\v B^{(1)}(\v x, 0) \\
\partial_t\v D^{(2)}(\v x, 0) \\
\partial_t\v B^{(2)}(\v x, 0)
\end{array}\right).
$$
Since $\v M(\v x)$ has a $7\times 7$ nonzero minor,
we have 
\begin{equation}\label{equation:bound_alpha_first}
|\nabla\alpha|^2+|\nabla\beta|^2+|\gamma|^2\leq C_0(|\alpha|^2+|\beta|^2+|\v b|^2),
\end{equation}
\begin{equation}\label{equation:bound_xi}
|\xi|^2\le C_0\big(|\partial_t^2\v u^{(1)}(\v x,0)|^2+|\partial_t^2\v u^{(2)}(\v x,0)|^2\big).
\end{equation}
Taking the derivative with respect to the variable $x_k$ on both sides of (\ref{equation:para_xi}), shows that
$$\v D_0^{(j)}\partial_k\xi = -\big(\partial_k\rho_0\big)\partial_t^2\v u^{(j)}(\v x,0)
                              -\rho_0\partial_k\partial_t^2\v u^{(j)}(\v x,0)
                              -\big(\partial_k\v D_0^{(j)}\big)\xi,
$$
and hence
\begin{equation}\label{equation:bound_xi_first}
|\nabla\xi|^2\le C_0\sum_{j=1}^{2}\big(|\partial_t^2\v u^{(j)}(\v x,0)|^2+|\nabla\partial_t^2\v u^{(j)}(\v x,0)|^2\big).
\end{equation}
Therefore
\begin{equation}\label{equation:carleman_xi}
\int_{\Omega} e^{2\tau\varphi_0}\big(|\xi|^2+|\nabla\xi|^2\big)
\le C_0\int_{\Omega_0}e^{2\tau\varphi_0}
	\Big(\sum_{j=1}^{2}\big(|\partial_t^2\v u^{(j)}(\v x,0)|^2+|\nabla\partial_t^2\v u^{(j)}(\v x,0)|^2\big)\Big).
\end{equation}
In addition, taking the derivative with respect to the variable $x_k$ on both sides of (\ref{equation:parameter}), we obtain
\begin{equation*}\label{}
\v M(\v x)\left(\begin{array}{c}
\nabla\partial_k\alpha \\ \partial_k\gamma \\ \nabla\partial_k\beta
\end{array}\right) = 
\partial_k\v N(\v x)\left(\begin{array}{c}
\alpha \\ \beta
\end{array}\right) + 
\v N(\v x)\left(\begin{array}{c}
\partial_k\alpha \\ \partial_k\beta
\end{array}\right) + 
\partial_k\v b(\v x) -
\partial_k\v M(\v x)\left(\begin{array}{c}
\nabla\alpha \\ \gamma \\ \nabla\beta
\end{array}\right),
\end{equation*}
and hence
\begin{equation}\label{equation:bound_alpha_beta_second}
|\nabla\nabla\alpha|^2+|\nabla\nabla\beta|^2+|\nabla\gamma|^2\leq 
C_0(|\alpha|^2+|\beta|^2+|\v b|^2+|\nabla\v b|^2).
\end{equation}
Therefore
\begin{eqnarray}
& & \int_{\Omega} e^{2\tau\varphi_0}\big(|\nabla\nabla\alpha|^2+|\nabla\nabla\beta|^2+|\nabla\gamma|^2\big)\nonumber\\
&\le& C_0\int_{\Omega}e^{2\tau\varphi_0} \big(|\alpha|^2+|\beta|^2\big)
	 	+C_0\int_{\Omega_0}e^{2\tau\varphi_0}\big(|\v b|^2+|\nabla\v b|^2\big),\nonumber\\
&\le& C_0\int_{\Omega}e^{2\tau\varphi_0} \big(|\nabla\alpha|^2+|\nabla\beta|^2\big)
 	+C_0\int_{\Omega_0}e^{2\tau\varphi_0}\big(|\v b|^2+|\nabla\v b|^2\big),\label{equation:para_second_bound}
\end{eqnarray}
because $\alpha, \beta$ are supported in $\Omega_0$.
We recall one lemma from \cite{eller2006carleman}.
\begin{lemma}\label{lemma:lemma1}
There exists constant $\tau_0>0$ and $C_0>0$ such that, for all $\tau>\tau_0$ and $\v v\in H^1_0(\Omega)$, 
$$\tau\int_{\Omega}e^{2\tau\varphi_0}|\v v|^2\le C_0\int_\Omega e^{2\tau\varphi_0}\big(|\curl\v v|^2+|\div\v v|^2\big).$$
\end{lemma}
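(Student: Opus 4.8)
The plan is to prove this first-order Carleman estimate by the classical conjugation-and-commutator method, viewing $\v v\mapsto(\curl\v v,\div\v v)$ as an (overdetermined) elliptic first-order operator. First I would set $\v u=e^{\tau\varphi_0}\v v$ and write $\v p=\nabla\varphi_0$, so that $e^{\tau\varphi_0}\curl\v v=\curl\v u-\tau\,\v p\times\v u$ and $e^{\tau\varphi_0}\div\v v=\div\v u-\tau\,\v p\cdot\v u$. Since $\v u\in H^1_0(\Omega)$ whenever $\v v\in H^1_0(\Omega)$, the claim is equivalent to the weight-free inequality $\tau\int_\Omega|\v u|^2\le C_0\int_\Omega\big(|\curl\v u-\tau\,\v p\times\v u|^2+|\div\v u-\tau\,\v p\cdot\v u|^2\big)$. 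Expanding the two squares and using the pointwise identity $|\v p\times\v u|^2+(\v p\cdot\v u)^2=|\v p|^2|\v u|^2$, the leading, second-order-in-$\tau$ contribution is exactly $\tau^2\int_\Omega|\v p|^2|\v u|^2$. By density it suffices to argue for $\v v\in C_0^\infty(\Omega)$ and then pass to $H^1_0(\Omega)$.

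The key observation is that this leading term is coercive. Because $\v x_*\notin\overbar\Omega$, the weight $\varphi_0=e^{\theta|\v x-\v x_*|^2}\ge 1$ gives $|\v p|=|\nabla\varphi_0|=2\theta\,\varphi_0\,|\v x-\v x_*|\ge c_1>0$ on $\overbar\Omega$, so that $\tau^2\int_\Omega|\v p|^2|\v u|^2\ge c_1\tau^2\int_\Omega|\v u|^2$, which already dominates the target $\tau\int_\Omega|\v u|^2$ once $\tau$ is large. It then remains to show that the first-order cross terms $-2\tau\int(\curl\v u)\cdot(\v p\times\v u)-2\tau\int(\div\v u)(\v p\cdot\v u)$ do not destroy this coercivity. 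I would integrate these by parts (all boundary terms vanish since $\v u\in H^1_0$), which transfers one derivative onto the weight: the divergence-type cross term produces the Hessian quadratic form $\tau\int(\nabla^2\varphi_0)\v u\cdot\v u$ plus a piece involving only $\curl\v u$, while the remaining gradient contributions collapse to the zeroth-order term $\tau\int(\Delta\varphi_0)|\v u|^2$. Organizing everything this way, the only genuinely first-order remainder can be reduced to a single term of the form $\tau\int(\curl\v u)\cdot(\v u\times\v p)$.

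This surviving term is the main obstacle, and it is exactly where the structure of the Carleman weight, rather than the mere non-vanishing of $\nabla\varphi_0$, must be exploited. One controls it by Young's inequality against the genuinely available curl-square $\int|\curl\v u|^2$ and the rotational part $\tau^2\int|\v p\times\v u|^2$ of the leading term; the naive split leaves a deficit, which I would absorb using the positivity of the Hessian ($\varphi_0$ is strongly convex, with $\nabla^2\varphi_0\ge 2\theta\,\v I$ since $\varphi_0\ge 1$) together with the pseudoconvexity condition (\ref{equation:pseudoconvex}), used precisely as in the scalar estimate of Lemma~\ref{lemma:scalar_wave_carleman}. Taking $\tau$ larger than a threshold $\tau_0$ then lets the coercive $\tau^2$ term dominate all the zeroth- and first-order remainders, yielding the stated bound (in fact with the stronger weight $\tau^2$ on the left). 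This is the first-order analogue of the scalar wave and Maxwell Carleman estimates recalled above, following \cite{eller2006carleman}.
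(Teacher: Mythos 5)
Your reduction is sound as far as it goes: the conjugation $\v u=e^{\tau\varphi_0}\v v$, the identity $|\v p\times\v u|^2+(\v p\cdot\v u)^2=|\v p|^2|\v u|^2$, the lower bound $|\v p|\ge c_1>0$ on $\overbar{\Omega}$ (valid because $\v x_*\notin\overbar{\Omega}$), and the integration by parts that reduces the cross terms to zeroth-order terms (Hessian and Laplacian of $\varphi_0$, each carrying a single factor of $\tau$) plus the one genuinely first-order remainder $-4\tau\int_\Omega(\curl\v u)\cdot(\v p\times\v u)$ are all correct. The gap is the final absorption step, and it is fatal. Young's inequality gives $4\tau|\langle\curl\v u,\v p\times\v u\rangle|\le(2/\epsilon)\|\curl\v u\|^2+2\epsilon\tau^2\|\v p\times\v u\|^2$; since $|\v p\times\v u|^2=|\v p|^2|\v u|^2$ pointwise wherever $\v u\perp\v p$, keeping the leading term $\tau^2\int|\v p|^2|\v u|^2$ coercive forces $\epsilon<1/2$, and then the coefficient of $\|\curl\v u\|^2$ is $1-2/\epsilon<-3$: you are left with a deficit of at least $3\|\curl\v u\|^2$. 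That deficit is a \emph{first-order} quantity, of size $\tau^2\int|\v u|^2$ for fields oscillating at frequency $\tau$, whereas everything you propose to absorb it with --- the Hessian term $2\tau\int(\nabla^2\varphi_0)\v u\cdot\v u$ and the $\tau\int(\Delta\varphi_0)|\v u|^2$ term --- is zeroth order, of size $O(\tau)\int|\v u|^2$. No choice of $\epsilon$, $\theta$ or $\tau_0$ closes this; and the pseudoconvexity condition (\ref{equation:pseudoconvex}) cannot help, since it constrains the wave speed $c$ of the evolution operator $\partial_t^2-c\Delta$ and plays no role in this purely spatial estimate.

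The obstruction is structural, and the clearest evidence is that your parenthetical conclusion (``in fact with the stronger weight $\tau^2$ on the left'') is provably false, so the argument as sketched cannot be completed. Work in the conjugated variables with complex fields (take real parts at the end). Fix $\v x_0\in\Omega$, set $\v p_0=\nabla\varphi_0(\v x_0)$, choose $\xi\perp\v p_0$ with $|\xi|=\tau|\v p_0|$, and let $\zeta=\xi+i\tau\v p_0$, so that $\zeta\cdot\zeta=0$; then the polarization $\zeta$ lies in the kernel of the conjugated symbol, since $i\zeta\cdot\zeta=0$ and $i\zeta\times\zeta=0$. Taking $\v u(\v x)=\chi\big((\v x-\v x_0)/\delta\big)e^{i\xi\cdot\v x}\zeta$ with a fixed bump $\chi$ and $\delta=\tau^{-1/2}$, the quantities $\div\v u-\tau\v p\cdot\v u$ and $\curl\v u-\tau\v p\times\v u$ consist only of cutoff errors $O(\delta^{-1}|\zeta|)$ and weight-variation errors $O(\tau\delta|\zeta|)$, so the right-hand side is $O(\tau)\int|\v u|^2$, smaller than $\tau^2\int|\v u|^2$ by a factor $\tau$. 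Thus the lemma is sharp at the power $\tau^1$, and the loss of one power relative to the elliptic count is exactly the signature of these characteristic directions that a pointwise Young absorption cannot see. Note finally that the paper itself offers no proof to compare with --- it quotes the lemma from \cite{eller2006carleman} --- and the known proofs do not stay at first order: one writes $\Delta\v v=\nabla(\div\v v)-\curl\curl\v v$, observes that each component of $\Delta\v v$ is the divergence of a vector field whose entries are components of $\div\v v$ and $\curl\v v$, and applies a Carleman estimate for the Laplacian with divergence-form (weighted $H^{-1}$-type) right-hand side; that is where the single power of $\tau$ comes from. Your first-order computation is a reasonable start, but it must feed into such a second-order argument rather than end in a Young absorption.
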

Applying Lemma \ref{lemma:lemma1} with $\v v=\nabla\alpha$, we have
$$\tau\int_{\Omega}e^{2\tau\varphi_0}|\nabla\alpha|^2\le C_0 \int_\Omega e^{2\tau\varphi_0}|\Delta \alpha|^2
	\leq C_0 \int_\Omega e^{2\tau\varphi_0}|\nabla\nabla \alpha|^2,$$
and hence
$$\tau\int_{\Omega}e^{2\tau\varphi_0}\big(|\nabla\alpha|^2+|\nabla\beta|^2\big)
	\leq C_0 \int_\Omega e^{2\tau\varphi_0}\big(|\nabla\nabla \alpha|^2+|\nabla\nabla\beta|^2\big).$$
Combining with (\ref{equation:bound_alpha_first}) and (\ref{equation:para_second_bound}), we finally obtain the bound
\begin{equation}\label{equation:bound_Lambda_tilde}
\int_{\Omega} e^{2\tau\varphi_0}\tilde\Lambda\le C_0\int_{\Omega_0}e^{2\tau\varphi_0}\big(|\v b|^2+|\nabla\v b|^2\big)
\end{equation}
for $\tau$ large enough.

\subsection{End of the proof of Theorem \ref{theorem:inverse_problem}}\label{subsection:completion_proof}

We recall the following lemma from \cite{bellassoued2013carleman}.
\begin{lemma}\label{lemma:lemma2}
There exist constants $\tau_0>0$ and $C_0>0$ such that, for all $\tau>\tau_0$ and $\v v\in C^1(Q_0(\delta))$,
$$\int_{\Omega_0}|\v v(\v x, 0)|^2\le C_0
	\tau\int_{Q_0(\delta)}|\v v(\v x, t)|^2+C_0\tau^{-1}\int_{Q_0(\delta)}|\partial_t\v v(\v x, t)|^2.$$We recall that $\Omega_0=\Omega\setminus\overbar{\omega}$ and
$Q_0(\delta)=\Omega_0\times(-T+\delta,T-\delta)$.
\end{lemma}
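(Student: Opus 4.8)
The plan is to prove Lemma \ref{lemma:lemma2} as a standard \emph{trace-in-time} estimate, in which the two powers of $\tau$ come from a single weighted Young inequality. The inequality is purely one–dimensional in time (the spatial integration only enters trivially at the very end), so I would fix $\v x\in\Omega_0$ and work on the interval $I=(-T+\delta,T-\delta)$, noting that $t=0$ lies in its interior since $\delta<T$.

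First I would apply the fundamental theorem of calculus to $s\mapsto|\v v(\v x,s)|^2$ to get, for every $t\in I$,
$$|\v v(\v x, 0)|^2 = |\v v(\v x, t)|^2 - 2\int_0^t \v v(\v x, s)\cdot \partial_s\v v(\v x, s)\, ds.$$
Since $[0,t]\subseteq I$, the cross term is bounded \emph{uniformly in} $t$ by extending the $s$–integral to all of $I$ and using Young's inequality with weight $\tau$,
$$2\int_I |\v v|\,|\partial_s\v v|\, ds \le \tau\int_I |\v v|^2\, ds + \tau^{-1}\int_I |\partial_s\v v|^2\, ds.$$
This is exactly the split that places the large factor $\tau$ on the zeroth-order term and the small factor $\tau^{-1}$ on the derivative term.

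Next I would eliminate the leftover pointwise quantity $|\v v(\v x, t)|^2$ by \emph{averaging in} $t$: integrating the resulting inequality over $t\in I$ and dividing by $|I|=2(T-\delta)$ replaces $|\v v(\v x,t)|^2$ by $|I|^{-1}\int_I|\v v|^2$, whereas the two $\tau$-weighted terms are independent of $t$ and pass through unchanged. Choosing $\tau_0=|I|^{-1}$, for $\tau>\tau_0$ the constant-coefficient term $|I|^{-1}\int_I|\v v|^2$ is absorbed into $\tau\int_I|\v v|^2$, which yields a pointwise-in-$\v x$ bound of the claimed form. Integrating over $\v x\in\Omega_0$ and recalling $Q_0(\delta)=\Omega_0\times I$ then finishes the proof.

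The argument is essentially routine; the only step demanding any attention is the bookkeeping of the $\tau$-powers, namely orienting the Young weight so that $\partial_t\v v$ carries the favorable factor $\tau^{-1}$ rather than $\tau$. This is not merely cosmetic: it is precisely this gain that later permits absorbing the time-derivative contribution when the lemma is combined with the modified Carleman estimate of Subsection \ref{subsection:modify_carleman}. A secondary and even more minor point is that $t=0$ sits in the interior of $I$, so no endpoint term in time is produced and the averaging is legitimate.
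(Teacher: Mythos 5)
Your proof is correct. Note that the paper itself does not prove this lemma at all: it simply recalls it from the reference \cite{bellassoued2013carleman}, so there is no in-paper argument to compare against. Your proposal supplies exactly the standard elementary proof one would expect behind such a statement: the fundamental theorem of calculus applied to $s\mapsto|\v v(\v x,s)|^2$ between $0$ and $t$, a Young inequality with weight $\tau$ that puts the large factor on the zeroth-order term and the factor $\tau^{-1}$ on the time derivative, and an averaging in $t$ over $I=(-T+\delta,T-\delta)$ to remove the leftover pointwise term, absorbing the resulting $|I|^{-1}\int_I|\v v|^2$ into $\tau\int_I|\v v|^2$ once $\tau>\tau_0=|I|^{-1}$; integrating in $\v x$ over $\Omega_0$ then gives the claim with $C_0=2$. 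Each step is valid under the stated hypothesis $\v v\in C^1(Q_0(\delta))$, the extension of the integral from $[0,t]$ to $I$ is legitimate because the integrand $|\v v|\,|\partial_s\v v|$ is nonnegative and $[0,t]\subseteq I$ for either sign of $t$, and the constants $\tau_0$ and $C_0$ you produce are uniform in $\v v$, as the lemma requires. Your closing observation about the orientation of the Young weight is also the right one: it is the $\tau^{-1}$ in front of the $\partial_t$ term that makes the lemma useful when combined with estimate (\ref{equation:modified_carlemanj}) in Subsection \ref{subsection:completion_proof}.
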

By taking $\v v=e^{\tau\varphi_0}\partial_t\v D^{(j)}(\v x, 0)$ in the above estimate and invoking (\ref{equation:modified_carleman})
to control the derivatives of $\v D^{(j)}(\v x,0)$, we see that
\begin{eqnarray*}
\int_{\Omega_0}e^{2\tau\varphi_0}|\partial_t\v D^{(j)}(\v x, 0)|^2 &\le& 
	C_0\tau \int_{Q_0(\delta)}e^{2\tau\varphi}|\partial_t\v D^{(j)}|^2
	+C_0\tau^{-1}\int_{Q_0(\delta)}e^{2\tau\varphi}|\partial_t^2\v D^{(j)}|^2\\
&\le& C_0\tau^{-2}\mathfrak{E}^{(j)},
\end{eqnarray*}
from (\ref{equation:modified_carlemanj}), where
$$\mathfrak{E}^{(j)} = \int_Q e^{2\tau\varphi}\Lambda + e^{2\tau\Phi} \mathfrak{O}^{(j)}+ e^{2\tau(1-2\epsilon)}.$$
We proceed similarly with the higher-order derivatives of $\v D^{(j)}(\v x,0)$ and with the other fields, to obtain
\begin{eqnarray*}
\int_{\Omega_0}e^{2\tau\varphi_0}|\partial_k\partial_t\v D^{(j)}(\v x, 0)|^2 &\le& 
	C_0\tau \int_{Q_0(\delta)}e^{2\tau\varphi}|\partial_k\partial_t\v D^{(j)}|^2
	+C_0\tau^{-1}\int_{Q_0(\delta)}e^{2\tau\varphi}|\partial_k\partial_t^2\v D^{(j)}|^2\\
&\le& C_0\mathfrak{E}^{(j)},\\
\int_{\Omega_0}e^{2\tau\varphi_0}|\partial_t\v B^{(j)}(\v x, 0)|^2 &\le& 
	C_0\tau \int_{Q_0(\delta)}e^{2\tau\varphi}|\partial_t\v B^{(j)}|^2
	+C_0\tau^{-1}\int_{Q_0(\delta)}e^{2\tau\varphi}|\partial_t^2\v B^{(j)}|^2\\
&\le& C_0\tau^{-2}\mathfrak{E}^{(j)},\\
\int_{\Omega_0}e^{2\tau\varphi_0}|\partial_k\partial_t\v B^{(j)}(\v x, 0)|^2 &\le& 
	C_0\tau \int_{Q_0(\delta)}e^{2\tau\varphi}|\partial_k\partial_t\v B^{(j)}|^2
	+C_0\tau^{-1}\int_{Q_0(\delta)}e^{2\tau\varphi}|\partial_k\partial_t^2\v B^{(j)}|^2\\
&\le& C_0\mathfrak{E}^{(j)},\\
\int_{\Omega_0}e^{2\tau\varphi_0}|\partial_t^2\v u^{(j)}(\v x, 0)|^2 &\le& 
	C_0\tau \int_{Q_0(\delta)}e^{2\tau\varphi}|\partial_t^2\v u^{(j)}|^2
	+C_0\tau^{-1} \int_{Q_0(\delta)}e^{2\tau\varphi}|\partial_t^3\v u^{(j)}|^2\\
&\le& C_0\tau^{-2}\mathfrak{E}^{(j)},\\
\int_{\Omega_0}e^{2\tau\varphi_0}|\partial_k\partial_t^2\v u^{(j)}(\v x, 0)|^2 &\le& 
	C_0\tau \int_{Q_0(\delta)}e^{2\tau\varphi}|\partial_k\partial_t^2\v u^{(j)}|^2
	+C_0\tau^{-1} \int_{Q_0(\delta)}e^{2\tau\varphi}|\partial_k\partial_t^3\v u^{(j)}|^2\\
&\le& C_0\mathfrak{E}^{(j)}.
\end{eqnarray*}
It follows that
\begin{equation}\label{equation:bound_b_u}
\int_{\Omega_0}e^{2\tau\varphi_0}\big(|\v b|^2+|\nabla\v b|^2\big)\le
C_0\big(\mathfrak{E}^{(1)}+\mathfrak{E}^{(2)}\big),
\end{equation}
\begin{equation}
\int_{\Omega_0}e^{2\tau\varphi_0}
	\Big(\sum_{j=1}^{2}\big(|\partial_t^2\v u^{(j)}(\v x,0)|^2+|\nabla\partial_t^2\v u^{(j)}(\v x,0)|^2\big)\Big)
	\le C_0\big(\mathfrak{E}^{(1)}+\mathfrak{E}^{(2)}\big).
\end{equation}
From (\ref{equation:carleman_xi}) and (\ref{equation:bound_Lambda_tilde}), we infer that
\begin{equation}\label{bound_parameters}
\int_{\Omega}e^{2\tau\varphi_0}\Lambda-C_0\int_Q e^{2\tau\varphi}\Lambda \le 
C_0 e^{2\tau\Phi} (\mathfrak{O}^{(1)}+\mathfrak{O}^{(2)})+ C_0 e^{2\tau(1-2\epsilon)}.
\end{equation}
Since $\varphi-\varphi_0<0$ for $|t|>0$, by choosing $\tau_0$ large enough we can make 
$\int_{-T}^{T}e^{2\tau(\varphi-\varphi_0)}$
so small that for all $\tau>\tau_0$,
$$\int_Q e^{2\tau\varphi}\Lambda=\int_{\Omega} e^{2\tau\varphi_0}\Lambda \int_{-T}^{T}e^{2\tau(\varphi-\varphi_0)}
\ll \int_{\Omega} e^{2\tau\varphi_0}\Lambda.$$
Combining this estimate with (\ref{bound_parameters}) and using the fact that $\varphi_0\ge 1-\varepsilon$, it follows that
$$\int_{\Omega}\Lambda\le e^{-2\tau(1-\varepsilon)}\int_{\Omega}e^{2\tau\varphi_0}\Lambda
	\le C_0 e^{2\tau\Phi} (\mathfrak{O}^{(1)}+\mathfrak{O}^{(2)})+ C_0 e^{-2\tau\epsilon} $$
for all $\tau>\tau_0$. Taking 
$$\tau-\tau_0=\frac{-\ln\big(\mathfrak{O}^{(1)}+\mathfrak{O}^{(2)}\big)}{2(\Phi+\epsilon)}.$$
we finally obtain
\begin{eqnarray*}
& & C_0 e^{2\tau\Phi} (\mathfrak{O}^{(1)}+\mathfrak{O}^{(2)})+ C_0 e^{-2\tau\epsilon} \\ 
&\le & C_0e^{2\tau_0\Phi}e^{2(\tau-\tau_0)\Phi}(\mathfrak{O}^{(1)}+\mathfrak{O}^{(2)})+C_0e^{-2\tau_0\varepsilon}e^{-2(\tau-\tau_0)\epsilon}\\
& =  & C_0(\mathfrak{O}^{(1)}+\mathfrak{O}^{(2)})^{\frac{\epsilon}{\epsilon+\Phi}},
\end{eqnarray*}
which completes the proof.

\section{Conclusion}
We presented a complete electroseismic model that describes the coupling phenomenon of the 
electromagnetic waves and seismic waves in fluid immersed porous rock. 
Under some assumptions on the physical parameters, we derived  a H\"older  stability estimate  to the inverse problem of  recovery of the electric parameters
 and the coupling coefficient from  interior measurements near the boundary. How to relax  the constraints on the
 physical parameters  will be  the objective of future works.

\section{Acknowledgments}
This work was supported in part by  grant
LabEx PERSYVAL-Lab (ANR-11-LABX- 0025-01) and grant 
ANR-17-CE40-0029 of the French National Research Agency ANR (project MultiOnde).

\bibliographystyle{plain}
\bibliography{refs}

\end{document}